\documentclass{amsart}
\usepackage{amssymb, amsmath, amsthm, graphics, comment, xspace, enumerate}
\usepackage{hyperref}
\usepackage{color}
\newtheorem{thm}{Theorem}[section]

\newtheorem{lem}[thm]{Lemma}

\newtheorem{prop}[thm]{Proposition}
\newtheorem{prob}[thm]{Problem}
\theoremstyle{definition}
\newtheorem{defn}[thm]{Definition}
\newtheorem{example}[thm]{Example}
\theoremstyle{remark}
\newtheorem{rem}[thm]{Remark}
\numberwithin{equation}{section}

\begin{document}
\title[Multiparameter $C$-semigroups and  multiparameter $C$-cosine...]{Multiparameter $C$-semigroups and  multiparameter $C$-cosine functions}

\author{Marko Kosti\' c}
\address{Faculty of Technical Sciences,
University of Novi Sad,
Trg D. Obradovi\' ca 6, 21125 Novi Sad, Serbia}
\email{marco.s@verat.net}

\author{Halis Can Koyuncuo\u{g}lu}
\address{Department of Engineering,
Sciences\\Izmir Katip Celebi University, 35620, Cigli, Izmir, Turkey}
\email{haliscan.koyuncuoglu@ikcu.edu.tr}

\author{Youssef N. Raffoul}
\address{Department of Mathematics,
University of Dayton,
Dayton, OH 45469-2316 USA}
\email{yraffoul1@udayton.edu}

{\renewcommand{\thefootnote}{} \footnote{2020 {\it Mathematics
Subject Classification.} 35B15, 47D60, 47D09, 47D99.
\\ \text{  }  \ \    {\it Key words and phrases.} Multiparameter $C$-semigroups, multiparameter $C$-cosine functions, abstract multiparameter Cauchy problems, automatic extensions, locally convex spaces.}}

\begin{abstract}
In this paper, we present several new results concerning multiparameter $C$-semigroups. We introduce and systematically analyze the class of multiparameter $C$-cosine functions, providing several new structural results and applications to abstract multiparameter Cauchy problems of first/second order in locally convex spaces. We also consider automatic extensions of multiparameter $C$-semigroups and multiparameter $C$-cosine functions. 
\end{abstract}
\maketitle

\section{Introduction and preliminaries}

The class of multiparameter semigroups of operators was introduced by E. Hille in 1944; see \cite{bberens} and \cite{hill} for basic information in this direction. 
If $(X,\| \cdot \|)$ is a Banach space, then by a multiparameter semigroup we mean any
operator-valued function $T : [0,+\infty)^{n} \rightarrow L(X)$ such that $T(0)={\rm I}$ and $T(t+s)=T(t)T(s)$ for all $t,\ s\in [0,+\infty)^{n};$ here, $L(X)$ denotes the Banach space of all bounded linear operators on $X$ and ${\rm I}$ denotes the identity operator on $X$. A semigroup  $(T(t))_{t\in [0,+\infty)^{n}}$ is called strongly continuous if the mapping $t\mapsto T(t)x,$ $t\in [0,+\infty)^{n}$ is strongly continuous at $t=0.$ 

The class of $C$-semigroups and its applications to ill-posed evolution equations, where the operator $C\in L(X)$ is injective, were analyzed by R. deLaubenfels in the important research monograph \cite{l1} (1994).
Further on, the class of two-parameter $C$-semigroups defined on the set $[0,+\infty)^{2}$ and the related Cauchy problems were analyzed by M. Janfada in 2009 (\cite{janfada}); cf. also the recent research article \cite{blali} by A. Blali, A. El Amrani and J. Ettayb for the notion of a
two-parameter $C$-groups of bounded linear operators
on non-Archimedean Banach spaces, and the research article \cite{Khanehgir}, where
M. Khanehgir, F. Roohei and E. Goharei Moghaddam have analyzed two-parameter $N$-times integrated semigroups. 

The class of local $C$-semigroups and the class of local integrated semigroups were investigated in the papers \cite{tanaka} by N. Tanaka, N. Okazawa (1990) and \cite{a42} by W. Arendt, O. El--Mennaoui, V. Keyantuo (1994).
In this paper, we introduce and analyze the class of multiparameter $C$-semigroups defined on arbitrary non-empty set $\Omega \subseteq {\mathbb R}^{n}$ containing zero. In our approach, a strongly continuous family $(T(t))_{t\in \Omega}\subseteq L(X)$ 
is said to be a (local, if $\Omega$ is bounded) $C$-semigroup if $T(t+s)C=T(t)T(s)$ for all $t$, $s\in \Omega$ with $t+s \in \Omega$,
and $T(0)=C;$ here, the operator $C\in L(X)$ need not be injective and $X$ is a general Hausdorff sequentially complete locally convex space over the field of complex numbers (SCLCS). The notion of a $C$-semigroup $(T(t))_{t\in \Omega}$ extends the notion introduced by M. Janfada in \cite[Definition 2.1]{janfada}, where the author has assumed that the operator $C$ is injective and $\Omega=[0,+\infty)^{2}.$  For further information concerning multiparameter strongly continuous semigroups, multiparameter integrated $C$-semigroups and their applications, we refer the reader to the list of references quoted on pp. 593--595 in \cite{nova-selected}.  

The class of cosine functions was introduced by the Czech mathematician M. Sova (\cite{sova}; 1966) with a view to study the abstract differential equations of second order; it is also worth noticing that some initial results in this direction were obtained by S. Kurepa in \cite{kurepa}-\cite{skurepa} (1960-1962). Recall, a strongly continuous family $(C(t))_{t\in {\mathbb R}}$ of bounded linear operators on $X$ 
is said to be a cosine function if  $C(t+s)+C(t-s)=2C(t)C(s)$ for all $t$, $s\in {\mathbb R}$ and $C(0)={\rm I}.$ The infinitesimal generator $A$ of $(C(t))_{t\in {\mathbb R}}$ is defined by 
$$
D(A):=\Bigl\{x \in X : \lim_{t\rightarrow 0}2t^{-2}\bigl( C(t)x-x\bigr) \mbox{ exists in }X\Bigr\}
$$
and
$$
Ax:=\lim_{t\rightarrow 0}2t^{-2}\bigl( C(t)x-x\bigr) ,\quad x\in D(A).
$$
It is a closed linear operator which is densely defined in $X.$
By \cite[Theorem 1]{skurepa}, the Lebesgue measurability of function $t\mapsto C(t) \in L(X)$ is equivalent with its continuity; if this is the case, the infinitesimal generator $A$ of 
$(C(t))_{t\in {\mathbb R}}$ is a bounded linear operator on $X$, and we have
\begin{align}\label{cc}
C(t)=\sum_{k=0}^{+\infty}\frac{t^{2k}A^{k}}{(2k)!},\quad t\in  {\mathbb R},
\end{align}
with the convergence being absolute and uniform on compact subsets of ${\mathbb R}$.

The class of local $C$-cosine functions was introduced by 
F. L. Huang and
T. W. Huang (\cite{huang}, 1995). For more details about (local) $C$-semigroups and (local) $C$-cosine functions, we refer the reader to the research monographs \cite{l1}, \cite{fadoa}, \cite{knjiga}-\cite{FKP}, \cite{mli}, the research articles \cite{ksho11}-\cite{ksho12}, \cite{lishaw2007}, \cite{pshaw}, \cite{shaw2005} and the references quoted therein; for the theory of local $C$-cosine functions and their applications, we would like to specifically mention the short monograph \cite{tajw} by S.-Y. Shaw (2002).

The class of multidimensional $C$-cosine functions has not been introduced in the existing literature by now. In connection with this issue, we would like to emphasize the following facts: 
If $B$ is a complex Banach algebra with unit element $e,$ then the properties of mapping $F: X\rightarrow X$ which satisfies the cosine functional equation
\begin{align}\label{cosine}
F(x+y)+F(x-y)=2F(x)F(y),\quad x,\ y\in X;\ \ F(0)=e
\end{align}
were investigated by S. Kurepa in \cite[Section 3]{skurepa}. In particular, if for each $x\in X$ the mapping $t\mapsto F(tx)\in B,$ $t\in {\mathbb R}$ is Lebesgue measurable and \eqref{cosine} holds, then we know, by \cite[Theorem 3]{skurepa}, that there exists a mapping $A: X \rightarrow B$ such that the following conditions hold:
\begin{itemize}
\item[(i)] $F(x)=\sum_{k=0}^{+\infty}\frac{[A(x)]^{k}}{(2k)!},\ x\in  X;$ $A(tx)=t^{2}A(x),\ t\in {\mathbb R},\ x\in X;$
\item[(ii)] $A(x)A(y)=A(y)A(x)\mbox{ and }A(x+y)+A(x-y)=2A(x)+2A(y),\ x,\ y\in X;$
\item[(iii)] $A^{2}(x+y)+A^{2}(x-y)=2A^{2}(x)+2A(y)+12A(x)A(y),\ x,\ y\in X;$
\item[(iv)] $A(\cdot)$ is continuous if and only if $\lim_{t\rightarrow 0}F(tx)=e,$ uniformly for $x$ in some sphere of $X;$
\item[(v)] if $x_{0}\in X$ satisfies $\|e-A(x_{0}) \|<1,$ then there exists an additive function $L: X \rightarrow B$ such that $A(x)=[L(x)]^{2}$ for all $x\in X;$ moreover, the continuity of $A(\cdot)$ implies the continuity of $L(\cdot).$
\end{itemize}
Concerning the cosine functional equation, the interested reader may also consult \cite[Theorem 3, Theorem 5]{kurepa}, as well as the research articles \cite{fris} by  
P. de Place Friis, H. Stetker, \cite{kana} by P. Kannappan, as well as the research monographs \cite{ace} by 
J. Acz\'el, J. Dhombres, \cite{cer} by S. Czerwik and \cite{stet} by H. Stetker.

In this paper, we continue the analysis of cosine functional equation by exploring the class of multiparameter $C$-cosine functions defined on arbitrary non-empty set $\Omega \subseteq {\mathbb R}^{n}$ containing zero. In our approach, a strongly continuous family $(C(t))_{t\in \Omega}\subseteq L(X)$ 
is said to be a (local, if $\Omega$ is bounded) $C$-cosine function if $C(t+s)C+C(t-s)C=2C(t)C(s)$ for all $t$, $s\in \Omega$ with $t\pm s \in \Omega$,
and $C(0)=C;$ here, the operator $C \in L(X)$ need not be injective and $X$ is an SCLCS. 

The organization and main ideas of paper can be described as follows. 
Suppose that $(a_{1},...,a_{n})$ is a basis in ${\mathbb R}^{n}$, $\tau_{i}\in (0,+\infty]$ for $1\leq i \leq n$, $D_{i}=[0,\tau_{i})$ or $D_{i}=(-\tau_{i},\tau_{i})$ for $1\leq i\leq n,$ and
\begin{align}\label{me1}
\Omega :=\Bigl\{ t_{1}a_{1}+...+t_{n}a_{n} : t_{i}\in D_{i}\ \ (1\leq i\leq n) \Bigr\}.
\end{align}
If $(T(t))_{t\in \Omega}$ is a $C$-semigroup and $1\leq i\leq n$, then we define the directional infinitesimal generator ${\mathcal A}_{i}$ of $(T(t))_{t\in \Omega}$ by its graph
\begin{align}\label{maww1}
{\mathcal A}_{i} :=\Biggl\{ ( x,y) \in X \times X : \lim_{t_{i}\rightarrow 0}\frac{T\bigl(0,...,t_{i}a_{i},...,0\bigr)x-Cx}{t_{i}}=Cy\Biggr\}.
\end{align}
In this paper, we consider the well-posedness of the following
$n$-parameter abstract Cauchy inclusion
\[ (ACP)_{\Omega}: \left\{
\begin{array}{l}
u\in C(\Omega : X)\cap C^1 \bigl(\Omega^{\circ} :X\bigr),\\
u_{t_{i};a_{i}}(t)\in {\mathcal A}_{i}u(t),\;t \in \Omega^{\circ},\ 1\leq i\leq n,\\
u(0)=Cx,
\end{array}
\right. \]
where
$$
u_{t_{i};a_{i}}(t):= \lim_{h_{i}\rightarrow 0}\frac{u\bigl(t+h_{i}a_{i} \bigr)-u(t)}{h_{i}} ,\quad t\in \Omega,
$$
provided that the above limit exists in $X;$ cf. Theorem \ref{tucko1} for more details (in the formulation of this result, we also clarify some other structural properties of multiparameter $C$-semigroups).

Section \ref{ccos} is devoted to the study of multiparameter $C$-cosine functions. The basic notion 
is introduced in Definition \ref{1.2.1.1.1}; after that, we prove that the assumptions $\Omega=-\Omega,$  $\Omega$ is convex and $\Omega$ has a nonempty interior together with the assumption that the operator $C$ is injective imply that a $C$-cosine function $(C(t))_{t\in \Omega}$ is commutative, i.e., $C(t)C(s)=C(s)C(t)$ for all $t,\ s\in \Omega ;$ cf. Proposition \ref{rambo}. Some similar results are clarified in Proposition \ref{df}. Further on, in Proposition \ref{cos}, we prove that any cosine function $(C(t))_{t\in [0,+\infty)^{n}}$ on a complex Banach space $X$ is exponentially bounded; cf. also Remark \ref{renik} and Proposition \ref{asd}, 
where we observe that it is very difficult to precisely compute the multidimensional Laplace transform of an exponentially equicontinuous $C$-cosine function $(C(t))_{t\in [0,+\infty)^{n}}$ if $n\geq 2$. 

Abstract multiparameter second-order Cauchy problems are considered in Subsection \ref{gsd}. The first structural result of this subsection is Theorem \ref{emoj}, where we prove that the expression 
\begin{align*} 
C(t):=\frac{1}{2}\Bigl[ T(t)+T(-t)\Bigr],\quad t\in \Omega,
\end{align*}
determines a $C$-cosine function in the case that $\Omega=-\Omega$ and $(T(t))_{t\in \Omega}$ is a $C$-semigroup. If $\tau_{i}\in (0,+\infty]$ for $1\leq i \leq n$, $\Omega =(-\tau_{1},\tau_{1}) \times ... \times (-\tau_{n},\tau_{n})$, ${\mathcal A}_{i}$ is a subgenerator of a $C$-semigroup $(T_{i}(t_{i}) \equiv T(t_{i}e_{i}))_{0\leq \tau <\tau_{i}}$ 
and
${\mathcal A}_{i}^{2}$ is closed ($1\leq i\leq n$), then Theorem \ref{emoj} also asserts that, for every $x\in \bigcap_{1\leq i \leq n}D({\mathcal A}_{i}^{2}),$ the function $u(t):=C(t)C^{n-1}x,$ $t\in \Omega$ is a solution of the following $n$-parameter Cauchy inclusion of second order
\[ (ACP_{2}): \left\{
\begin{array}{l}
u\in C^{2}(\Omega : X);\\ \frac{\partial ^{2}u}{\partial t_{i}^{2}}(t)\in {\mathcal A}_{i}^{2}u(t),\;t \in \Omega,\ 1\leq i\leq n;\\
u(0)=C^{n}x;\\  \Bigl( \frac{\partial u}{\partial t_{i}}\Bigr)_{t=(t_{1},...,t_{i-1},0,t_{i+1},...,t_{n})} \in \frac{1}{2}{\mathcal A}_{i}\Biggl[       
\prod_{\substack{1\le j\le n\\ i\ne j}} T_{j}\bigl( t_{j}\bigr)x-\prod_{1\leq j\leq n\atop i\neq j} T_{j}\bigl( -t_{j}\bigr)x\Biggr],\\ t\in \Omega,\ 1\leq i\leq n,
\end{array}
\right. \]
as well as that the uniqueness of solutions to $(ACP_{2})$ holds if the operator $C$ is injective.

The second structural result of Subsection \ref{gsd} is Theorem \ref{novio}. If $\tau_{i}\in (0,+\infty]$ for $1\leq i \leq n$, $\Omega =(-\tau_{1},\tau_{1}) \times ... \times (-\tau_{n},\tau_{n})$ and $(C(t))_{t\in \Omega}$ 
is a $C$-cosine function, then we first prove here that, if $1\leq i\leq n$, the operator ${\mathcal A}_{i}$ is the integral generator of a $C$-cosine function
$(C_{i}(t_{i})\equiv C(t_{i}e_{i}))_{-\tau_{i}< \tau <\tau_{i}},$ $C(t)C(s)=C(s)C(t)$ for all $t,\ s\in \Omega$ and $x\in D({\mathcal A}_{i}),$ then the function 
\begin{align}\label{dfg}
u_{i}(t):=C\bigl(t_{1},...,t_{i-1},t_{i},t_{i+1},...,t_{n} \bigr)Cx+C\bigl(-t_{1},...,-t_{i-1},t_{i},-t_{i+1},...,-t_{n} \bigr)Cx,\ t\in \Omega
\end{align}
 is a solution of the following $n$-parameter Cauchy inclusion of second order
\[ (ACP_{2;i}): \left\{
\begin{array}{l}
u_{i}\in C^{2}(\Omega : X);\\ \frac{\partial ^{2}u_{i}}{\partial t_{i}^{2}}(t)\in {\mathcal A}_{i}u_{i}(t),\;t \in \Omega;\\ u_{i}
\bigl(t_{1},...,t_{i-1},0,t_{i+1},...,t_{n} \bigr)\\=C\bigl(t_{1},...,t_{i-1},0,t_{i+1},...,t_{n} \bigr)Cx+C\bigl(-t_{1},...,-t_{i-1},0,-t_{i+1},...,-t_{n} \bigr)Cx, \\ \mbox{ if }t= \bigl(t_{1},...,t_{i-1},0,t_{i+1},...,t_{n} \bigr) \in \Omega ;\\
\Bigl( \frac{\partial u_{i}}{\partial t_{i}}\Bigr)_{t=(t_{1},...,t_{i-1},0,t_{i+1},...,t_{n})}=0,\quad \mbox{ if }t=\bigl(t_{1},...,t_{i-1},0,t_{i+1},...,t_{n} \bigr) \in \Omega .
\end{array}
\right. \]

If $n=2$ and $x\in D({\mathcal A}_{1}) \cap D({\mathcal A}_{2}),$ then we prove that $u_{1}=u_{2}=:u$ is a solution of the following two-parameter Cauchy inclusion of second order
\[ (ACP_{2})': \left\{
\begin{array}{l}
u\in C^{2}(\Omega : X);\\ \frac{\partial ^{2}u}{\partial t_{i}^{2}}(t)\in {\mathcal A}_{i}u(t),\;t \in \Omega,\ 1\leq i\leq 2;\\ u\bigl(t_{1},0\bigr)=2C\bigl(t_{1},0\bigr)Cx,\mbox{ if }|t_{1}|<\tau_{1}, \ u\bigl(0,t_{2}\bigr)=2C\bigl(0,t_{2}\bigr)Cx,\mbox{ if }|t_{2}|<\tau_{2}; \\
\Bigl( \frac{\partial u}{\partial t_{1}}\Bigr)_{t=(0,t_{2})}=0,\mbox{ if }|t_{2}|<\tau_{2},\ \ \mbox{ and }\Bigl( \frac{\partial u}{\partial t_{2}}\Bigr)_{t=(t_{1},0)}=0,\mbox{ if }|t_{1}|<\tau_{1}.
\end{array}
\right. \]
Furthermore, if the operator $C$ is injective, then ${\mathcal A}_{1}$ and ${\mathcal A}_{2}$ are closed linear operators and $u(t_{1},t_{2}):=C(t_{1},t_{2})Cx+C(t_{1},-t_{2})Cx,$ $t=(t_{1},t_{2})\in \Omega$ is a unique solution of problem $(ACP_{2})'.$ 

Finally, if $n \geq 2$ and $x\in D({\mathcal A}_{1}) \cap D({\mathcal A}_{2}) \cap ... \cap D({\mathcal A}_{n})$, then we clarify that the function
\begin{align}
u\bigl(t_{1},..., t_{n}\bigr):=\sum_{(\sigma_{1},...,\sigma_{n}) \in \{ -1,1\}^{n}}C\bigl( \sigma_{1}t_{1},..., \sigma_{n}t_{n}\bigr)Cx,\quad t=\bigl(t_{1},..., t_{n}\bigr) \in \Omega,
\end{align}
is a solution of the following $n$-parameter Cauchy inclusion of second order
\[ (ACP_{2})'': \left\{
\begin{array}{l}
u\in C^{2}(\Omega : X);\\ \frac{\partial ^{2}u}{\partial t_{i}^{2}}(t)\in {\mathcal A}_{i}u(t),\;t \in \Omega , \ 1\le i\leq n;\\ u(0)=2^{n}C^{2}x; \\
\Bigl( \frac{\partial u}{\partial t_{i}}\Bigr)_{t=(t_{1},...,t_{i-1},0,t_{i+1},...,t_{n})}=0,\ \mbox{if }t=\bigl(t_{1},...,t_{i-1},0,t_{i+1},...,t_{n} \bigr) \in \Omega ,
\end{array}
\right. \]
as well as that the injectiveness of operator 
$C$ implies that ${\mathcal A}_{i}$ is a closed linear operator for $1\leq i \leq n$ and the function $u(\cdot)$, given above, is a unique solution of problem $(ACP_{2})''.$  

In the one-dimensional setting, the extensions of local (convoluted) $C$-semigroups and local (convoluted) $C$-cosine functions have been investigated by S. W. Wang and M. C. Gao \cite[Theorem 2.6, Theorem 3.6]{wang}, provided that the operator $C$ is injective, and M. Kosti\' c \cite[Theorem 3.2.60, Remark 3.2.61]{FKP}, in general case.  Section \ref{subc} investigates the automatic extensions of multiparameter $C$-semigroups and multiparameter $C$-cosine functions; the main structural result of this section is Theorem \ref{nap}, which asserts the following: If the operator $C$ is injective, $\tau_{i}\in (0,+\infty]$ for $1\leq i \leq n$, $\Omega =[0,\tau_{1}) \times ... \times [0,\tau_{n}),$ resp. $\Omega =(-\tau_{1},\tau_{1}) \times ... \times (-\tau_{n},\tau_{n}),$ and $(T(t))_{t\in \Omega}$ 
is a locally equicontinuous $C$-semigroup, resp. $(C(t))_{t\in \Omega}$ 
is a locally equicontinuous $C$-cosine function given by the equality \eqref{3456} below, then, for every integer $k\in {\mathbb N},$ there exists a locally equicontinuous $C^{k}$-semigroup $(T_{k}(t))_{t\in k\Omega}$ such that $T_{k}(t)=T(t)C^{k-1}$ for all $t\in \Omega,$ resp. there exists a locally equicontinuous $C^{k}$-cosine function $(C_{k}(t))_{t\in k\Omega}$ such that $C_{k}(t)=C(t)C^{k-1}$ for all $t\in \Omega.$ We also propose an open question concerning the automatic extension of a general locally equicontinuous $C$-cosine function $(C(t))_{t\in \Omega}$; cf. Problem \ref{proba}.

In Section \ref{separ}, which can be viewed of some independent interest, we consider the existence and uniqueness of asymptotically almost periodic solutions to abstract multiparameter Cauchy problems (the results established in this section will appear in the forthcoming research monograph \cite{apsclcs}). In order to better explain our strivings, let us consider the operator family $(R(t))_{t\in [0,+\infty)^{n}}$ given by 
\begin{align}\label{prod} R(t):=R_{1}\bigl(t_{1}\bigr)\cdot...\cdot R_{n}\bigl(t_{n}\bigr),\quad t=\bigl(t_{1},...,t_{n}\bigr)\in [0,+\infty)^{n},
\end{align}
where $(X,\| \cdot \| )$ is a complex Banach space and the following condition holds:
\begin{itemize}
\item[(D1)] $\alpha_{i}>0$ and ${\mathcal A}_{i}$ is a subgenerator of a (local) $(g_{\alpha_{i}},C_{i})$-regularized resolvent family $(R_{i}(t_{i}))_{0\leq t_{i}<\tau_{i}}$ ($1\leq i\leq n$), $R_{i}(t_{i})R_{j}(t_{j})= R_{j}(t_{j})R_{i}(t_{i})$ and $C_{i}R_{j}(t_{j})=R_{j}(t_{j})C_{i}$ for all $1\leq i, \ j\leq n $ such that $i\neq j,$ $0\leq t_{i}<\tau_{i}$ and $0\leq t_{j}<\tau_{j}.$ Here, $g_{\zeta}(t):=t^{\zeta-1}/\Gamma(\zeta),$ $t>0$ ($\zeta>0 $), where $\Gamma(\cdot)$ denotes the Gamma function.
\end{itemize}
Every multiparameter strongly continuous semigroup $(T(t))_{t\in [0,+\infty)^{n}}$ can be represented in this way; cf. also Theorem \ref{tucko1} and the equality \eqref{345} for the corresponding results obtained for multiparameter $C$-semigroups.

In \cite[Theorem 8.1.23]{apsclcs}, we have recently proved that the assumption $x\in \bigcap_{1\leq i\leq n}D({\mathcal A}_{i} )$ implies that there exists a unique strong solution $u(\cdot) $ of problem
\[(P):\left\{
\begin{array}{l}
u\in {\mathcal C}(I  :X);\\
{\mathbf D}_{t_{i}}^{\alpha_{i}}u(t) \in {\mathcal A}_{i}u(t),\;t \in I,\ 1\leq i\leq n;\\
u(0)=C_{1}\cdot ....\cdot C_{n}x;\\ 
\Biggl[ \frac{\partial^{j}}{\partial t_{i}^{j}}u\bigl(t_{1},...,t_{i-1},t_{i},t_{i+1},...,t_{n}\bigr) \Biggr]_{t_{i}=0}=0
\mbox{ for all }  1\leq i\leq n,\ 0<j<m_{i}\mbox{ and }\\
\bigl(t_{1},...,t_{i-1},t_{i+1},...,t_{n}\bigr)\in [0,\tau_{1})\times ... \times [0,\tau_{i-1})\times [0,\tau_{i+1}) \times ... \times [0,\tau_{n}),
\end{array}
\right.
\]
which is given by
$
u(t):=R (t)x,\  t\in I;
$
here, $I=[0,\tau_{1}) \times ... \times [0,\tau_{n}) $, $m_{i}=\lceil \alpha_{i} \rceil$ for $1\leq i\leq n,$ and ${\mathcal C}(I: X)$ denotes the vector space consisting of all continuous functions $u : I\rightarrow  X$ such that the partial Caputo fractional derivative ${\mathbf D}_{t_{i}}^{\alpha_{i}}u(\cdot)$ is well-defined for $1\leq i\leq n.$ 
In the proofs of structural results concerning the abstract multiparameter Cauchy problems of second order, the uniqueness of solutions will be shown with the help of argumentation contained in the proof of the last mentioned result with $\alpha_{i}=2$ ($i\in {\mathbb N}_{n}$).

We first observe that the almost periodicity of each strongly continuous operator family $(R_{i}(t_{i}))_{t_{i}\geq 0}$ implies that $(R(t))_{t\in [0,+\infty)^{n}}$ is almost periodic, i.e., for each $x\in X,$ the mapping $t\mapsto R(t)x,$ $t\in [0,+\infty)^{n}$ is almost periodic, which means that,
for every $\epsilon>0$, there exists $L>0$ such that for each $t_{0}\in [0,+\infty)^{n}$ the ball $B(t_{0},L)\equiv \{t\in {\mathbb R}^{n} : |t-t_{0}|\leq L \},$ where $|\cdot - \cdot|$ denotes the Euclidean distance in ${\mathbb R}^{n}$, contains a point $\tau \in  [0,+\infty)^{n}$ such that, for every $t=(t_{1},...,t_{n})\in [0,+\infty)^{n}$, we have $\|R(t+\tau)x-R(t)x\| \leq \epsilon ;$ cf. \cite{nova-selected} for more details. After that, we consider the situation where
each operator family $(R_{i}(t_{i}))_{t_{i}\geq 0}$ is asymptotically almost periodic, i.e., for every $x\in X$ and $i\in {\mathbb N}_{n} ,$ the mapping $t_{i}\mapsto R_{i}(t_{i})x,$ $t_{i}\geq 0$ is asymptotically almost periodic in the sense that there exists an almost periodic function $h_{i} : [0,+\infty)\rightarrow X$ and a continuous function $q_{i} : [0,+\infty)\rightarrow X$ vanishing at plus infinity such that $R_{i}(t_{i})x=h_{i}(t)+q_{i}(t_{i}) $ for all $t_{i}\geq 0.$ Then we clarify that $(R(t))_{t\in [0,+\infty)^{n}}$ is asymptotically almost periodic in the sense that, for every $x\in X$ and $\epsilon>0$, there exist $L>0$ and $M>0$ such that for each $t_{0}\in [0,+\infty)^{n}$ the ball $B(t_{0},L)$ contains a point $\tau \in  [0,+\infty)^{n}$ such that, for every $t=(t_{1},...,t_{n})\in [0,+\infty)^{n}$ with the property that $\min(t_{1},...,t_{n})\geq M$, we have $\|R(t+\tau)x-R(t)x\| \leq \epsilon.$  Some illustrative applications of established results are presented, as well.\vspace{0.1cm}

\noindent {\bf Notation and preliminaries.} Unless stated otherwise, we will always assume henceforth that $X$ is an SCLCS and $n\in {\mathbb N};$ by $(e_{1},...,e_{n})$ we denote the standard basis of ${\mathbb R}^{n}.$ 
The abbreviation $\circledast$ denotes the fundamental system of seminorms\index{system of seminorms} which defines the topology of $X.$ For further information concerning the integration of functions with values in SCLCSs, the multivalued linear operators in sequentially complete locally convex spaces (MLOs) and solution operator families subgenerated by them, we refer the reader to \cite{FKP}; we will use the same notation as in this monograph. By ${\rm I}$ we denote the identity operator on $X.$ Given the numbers $s\in\mathbb R$ and $m\in {\mathbb N},$ we set $\lceil s\rceil:=\inf\{l\in\mathbb Z:s\leq l\}$ and ${\mathbb N}_{m}:=\{1,...,m\} .$ 
 
If $\emptyset \neq \Omega \subseteq {\mathbb R}^{n},$ then
a strongly continuous family $(R(t))_{t\in \Omega}\subseteq L(X)$ is said to be locally equicontinuous if, for every compact set $K\subseteq \Omega,$
the family $\{ R(t) : t\in \Omega \}$ is equicontinuous, i.e., for each seminorm $p\in \circledast$ there exist a real number $M>0$ and a seminorm $q\in \circledast$ such that $p(R(t)x)\leq cq(x)$ for all $x\in X$ and $t\in K.$ Let us emphasize that, if the pivot space $X$ is barreled, then $(R(t))_{t\in \Omega}\subseteq L(X)$ is always locally equicontinuous; this can be shown in the same way as in the proof of \cite[Proposition 1.1]{komura}. 

\section{Multiparameter $C$-semigroups}\label{sec1}

We start this section by introducing the following notion: 

\begin{defn}\label{1.2.1.1}
Let $0\in \Omega$ and $\Omega  \subseteq {\mathbb R}^{n}.$ A strongly continuous family $(T(t))_{t\in \Omega}\subseteq L(X)$ 
is said to be a (local, if $\Omega$ is bounded) $C$-semigroup if the following conditions hold:
\begin{itemize}
\item[(i)] $T(t+s)C=T(t)T(s)$ for all $t$, $s\in \Omega$ with $t+s \in \Omega$,
\item[(ii)] $T(0)=C$.
\end{itemize}

If $\Omega=[0,+\infty)^{n},$ then it is said that $(T(t))_{t\in [0,+\infty)^{n}}$ is exponentially equicontinuous if there exist real numbers $\omega_{1},...,\omega_{n}$ such that for each seminorm $p\in \circledast$ there exist a real number $M>0$ and a seminorm $q\in \circledast$ such that $p(T(t)x)\leq M\exp(\omega_{1}t_{1}+...+\omega_{n}t_{n})q(x)$ for all $x\in X$ and $t=(t_{1},...,t_{n})\in [0,+\infty)^{n} $; if this is the case and $(X,\| \cdot \|)$ is a complex Banach space, then we also say that  $(T(t))_{t\in [0,+\infty)^{n}}$ is exponentially bounded.

Finally, if $C={\rm I}, $ then we also say that $(T(t))_{t\in \Omega}$ is a strongly continuous semigroup.
\end{defn}

If $(T(t))_{t\in \Omega} $ 
is a $C$-semigroup, then plugging $s=0$ in (i.1) immediately implies that $T(t)C=T(t)C$ for all $t\in \Omega$; it is clear that we also have $T(s)T(t)=T(t)T(s)$ for all $t$, $s\in \Omega$ with $t+s \in \Omega$. If the operator $C$ is not injective, then 
the equality $T(s)T(t)=T(t)T(s)$ need not be satisfied for all $t$, $s\in \Omega$ (see, e.g., \cite[p. 1098]{lishaw2007} for the one-dimensional setting); if the region $\Omega$ has some specific geometrical properties, then we will later prove that the injectiveness of operator $C$ implies $T(s)T(t)=T(t)T(s)$ for all $t$, $s\in \Omega $ (see Theorem \ref{tucko1}(i)-(a)).

\begin{rem}\label{konc}
Let $n=1$ and $\Omega=[0,\tau)$ for some $0<\tau \leq +\infty .$
\begin{itemize}
\item[(i)] A multivalued linear operator ${\mathcal A}$ which satisfies 
\begin{itemize}
\item[(a)] $T(t){\mathcal A}\subseteq {\mathcal A}T(t)$, $t\in[0,\tau)$,
\item[(b)] $T(t)x-Cx=\int_0^tT(s)y\,ds$, $t\in[0,\tau)$, whenever $(x,y)\in {\mathcal A}$, 
\item[(c)] $(\int_0^tT(s)x\,ds,T(t)x-Cx)\in {\mathcal A}$ for all $x\in X$ and $t\in [0,\tau),$
\end{itemize}
is called a subgenerator of $(T(t))_{t\in[0,\tau)}$. In this case, the integral generator of $(T(t))_{t\in[0,\tau)}$ is defined by
\begin{align*}
&\hat{{\mathcal A}}:=\biggl\{(x,y)\in X\times X: T(t)x-Cx=\int_0^tT(s)y\,ds,\;t\in[0,\tau)\biggr\},
\end{align*}
and it coincides with the infinitesimal generator of $(T(t))_{t\in[0,\tau)}$ which is defined by
\begin{align*}
{\mathcal A}_{inf} :=\Biggl\{ ( x,y) \in X \times X : \lim_{t\rightarrow 0}\frac{T(t)x-Cx}{t}=Cy\Biggr\}.
\end{align*} 
\item[(ii)] If $C$ is injective, then $(T(t))_{t\in[0,\tau)}$ is non-degenerate in the sense that the assumption $T(t)x=0$ for all $t\in [0,\tau)$ implies $x=0$ and $\hat{{\mathcal A}}$ is single-valued; in this case, we also know that $\hat{{\mathcal A}}$ is the maximal subgenerator of $(T(t))_{t\in[0,\tau)}$ with respect to the set inclusion, and $C^{-1}\hat{{\mathcal A}}C=\hat{{\mathcal A}}.$ If the operator $C$ is not injective, then the integral generator $\hat{{\mathcal A}}$  of $(T(t))_{t\in[0,\tau)}$ need not be its subgenerator (see \cite[Example 3.2.42]{FKP}); furthermore, the local equicontinuity of a $C$-semigroup $(T(t))_{t\in [0,\tau)} $ implies that its integral generator $\hat{{\mathcal A}}$ is a closed MLO.
\end{itemize} 
\end{rem}

In what follows, we consider the $C$-semigroups defined on the region $\Omega$ of form \eqref{me1},
where $(a_{1},...,a_{n})$ is a basis in ${\mathbb R}^{n}$, $\tau_{i}\in (0,+\infty]$ for $1\leq i \leq n$, and $D_{i}=[0,\tau_{i})$ or $D_{i}=(-\tau_{i},\tau_{i})$ for $1\leq i\leq n.$ If $(T(t))_{t\in \Omega}$ is a $C$-semigroup and $i\in {\mathbb N}_{n}$, then we define the directional infinitesimal generator ${\mathcal A}_{i}$ of $(T(t))_{t\in \Omega}$ by \eqref{maww1}.
It is clear that for each $i\in {\mathbb N}_{n}$ the multivalued linear operator ${\mathcal A}_{i}$ is the infinitesimal generator of a $C$-semigroup $(T_{i}(t_{i})\equiv T(t_{i}a_{i}))_{t_{i}\in [0,\tau_{i})} $ as well as that ${\mathcal A}_{i}$ is single-valued if the operator $C$ is injective. The tuple ${\mathcal A}:=({\mathcal A}_{1},...,{\mathcal A}_{n})$ is said to be the directional infinitesimal generator of $(T(t))_{t\in \Omega}$.

Now we will state and prove the following result:

\begin{thm}\label{tucko1}
\begin{itemize}
\item[(i)] Suppose that $(a_{1},...,a_{n})$ is a basis in ${\mathbb R}^{n}$, $\tau_{i}\in (0,+\infty]$ for $1\leq i \leq n$, $D_{i}=[0,\tau_{i})$ or $D_{i}=(-\tau_{i},\tau_{i})$ for $1\leq i\leq n,$ 
and $\Omega$ is given by \eqref{me1}. If $(T(t))_{t\in \Omega} $ 
is a $C$-semigroup, then we have $T_{i}(t_{i})T_{j}(t_{j})=T_{j}(t_{j})T_{i}(t_{i})$ for all $1\leq i,\ j\leq n$ with $i\neq j$, $t_{i}\in [0,\tau_{i})$, $t_{j}\in [0,\tau_{j})$, and
\begin{align}\label{345}
T\bigl(t_{1},...,t_{n}\bigr)C^{n-1}=T_{1}\bigl(t_{1}\bigr) \cdot ... \cdot T_{n}\bigl(t_{n}\bigr),\quad \bigl(t_{1},...,t_{n}\bigr) \in \Omega.
\end{align}
Furthermore, the following holds:
\begin{itemize}
\item[(a)] If $C$ is injective, then $T_{i}(t_{i})T_{i}(t_{i}')=T_{i}(t_{i}')T_{i}(t_{i})$ for all $1\leq i\leq n$, $t_{i}\in [0,\tau_{i})$, $t_{i}'\in [0,\tau_{i})$ and $T(t)T(s)=T(s)T(t)$ for all $t,\ s\in \Omega.$
\item[(b)]
If $D=[0,+\infty)^{n},$ ${\mathcal A}=({\mathcal A}_{1},...,{\mathcal A}_{n})$ is the directional infinitesimal generator of $(T(t))_{t\in \Omega},$ $(T(t))_{t\in \Omega}$ is exponentially equicontinuous and the real numbers $\omega_{1},..., \omega_{n}$ satisfy that for each seminorm $p\in \circledast$ there exist a real number $M>0$ and a seminorm $q\in \circledast$ such that $p(T(t_{1}a_{1}+...+t_{n}a_{n})x)\leq M\exp(\omega_{1}t_{1}+...+\omega_{n}t_{n})q(x)$ for all $x\in X$ and $t=(t_{1},...,t_{n})\in [0,+\infty)^{n},$ then we have
\begin{align}\notag
C^{n-1} \int^{+\infty}_{0}...\int^{+\infty}_{0}& e^{-\lambda_{1}t_{1}-...-\lambda_{n}t_{n}}T\bigl(t_{1}a_{1}+...+t_{n}a_{n}\bigr)x\, dt_{1}\, ...\, dt_{n}
\\&  \label{345621}=\bigl( \lambda_{1}-{\mathcal A}_{1}\bigr)^{-1}C\cdot ... \cdot \bigl( \lambda_{n}-{\mathcal A}_{n}\bigr)^{-1}Cx,
\end{align}
for all $x\in X$ and $(\lambda_{1},...,\lambda_{n})\in {\mathbb C}^{n}$ such that $\Re \lambda_{i}>\omega_{i}$ for all $i\in {\mathbb N}_{n}.$
\end{itemize}
\item[(ii)] Suppose that 
$(T(t))_{t\in \Omega} $ 
is a $C$-semigroup and ${\mathcal A}=({\mathcal A}_{1},...,{\mathcal A}_{n})$ is the directional infinitesimal generator of $(T(t))_{t\in \Omega}$. Then for each $ x\in \bigcap_{i\in {\mathbb N}_{n}}D( {\mathcal A} _{i})$ the function $u(t):=T(t)x,$ $t\in \Omega $ is a   solution of the $n$-parameter abstract Cauchy inclusion $(ACP)_{\Omega}$.  The uniqueness of solution to $(ACP)_{\Omega}$ holds provided that the operator $C$ is injective, when the operator ${\mathcal A}_{i}$ is single-valued for $1\leq i \leq n.$
\end{itemize}
\end{thm}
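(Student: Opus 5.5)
Everything follows from the defining identity (i) of Definition \ref{1.2.1.1}, read along the coordinate directions $a_1,\dots,a_n$. Since $D_i\ni 0$, every point $t=t_1a_1+\dots+t_na_n\in\Omega$ can be built by adding the vectors $t_ia_i$ one at a time, and every partial sum stays in $\Omega$. This is because $\Omega$ is a coordinate box with respect to the basis $(a_i)$.

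\emph{Commutation and \eqref{345}.} For $i\neq j$, $t_i\in[0,\tau_i)$, $t_j\in[0,\tau_j)$, the vector $t_ia_i+t_ja_j$ lies in $\Omega$. Applying (i) twice gives
\[
T_i(t_i)T_j(t_j)=T(t_ia_i+t_ja_j)C=T_j(t_j)T_i(t_i).
\]
For \eqref{345} I would induct on $m$ and prove $T(t_1a_1+\dots+t_ma_m)C^{m-1}=T_1(t_1)\cdots T_m(t_m)$. The inductive step is
\[
T(s+t_{m+1}a_{m+1})C^{m}=T(s)T_{m+1}(t_{m+1})C^{m-1}.
\]
To finish I need $T_{m+1}(t_{m+1})$ to commute with $C$, so that $C^{m-1}$ can be moved next to $T(s)$ and the hypothesis applied. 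This follows from $T(t)C=CT(t)$, obtained by putting $s=0$ in (i). Negative values of $t_i$, allowed when $D_i=(-\tau_i,\tau_i)$, are handled identically, with $T_i(t_i):=T(t_ia_i)$.

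\emph{Part (a).} For $t_i,t_i'\ge0$ in $[0,\tau_i)$, the sum $t_i+t_i'$ may leave $[0,\tau_i)$, so I cannot argue directly. I would use the standard local $C$-semigroup device. By Remark \ref{konc}, ${\mathcal A}_i$ is a single-valued subgenerator commuting with $T_i(\cdot)$. Then $v(r)=T_i(t_i-r)T_i(t_i')T_i(r)x$ type arguments, or equivalently the integrated identity, yield $C\,T_i(t_i)T_i(t_i')=C\,T_i(t_i')T_i(t_i)$, and injectivity of $C$ cancels it. The precise route is to differentiate $r\mapsto T_i(t_i-r)T_i(t_i')\int_0^rT_i(\sigma)x\,d\sigma$. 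Once every $T_i(\cdot)$ commutes with every $T_j(\cdot)$, \eqref{345} gives
\[
T(t)T(s)C^{2n-2}=T(s)T(t)C^{2n-2},
\]
and injectivity finishes. \textbf{I expect this local-commutativity step to be the main obstacle}, since it is where the non-injective counterexample lives.

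\emph{Part (b).} Apply \eqref{345} inside the iterated Laplace integral. By Fubini and the one-parameter identity $\int_0^\infty e^{-\lambda t}T_i(t)x\,dt=(\lambda-{\mathcal A}_i)^{-1}Cx$ for exponentially equicontinuous $C$-semigroups, the integral factorizes in the order $1,\dots,n$. Pulling the bounded commuting operators $T_j$ outside each inner integral gives the right-hand side of \eqref{345621}.

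\emph{Part (ii).} For $x\in\bigcap D({\mathcal A}_i)$, the directional derivative is
\[
u_{t_i;a_i}(t)=\lim_{h\to0}h^{-1}T(t-t_ia_i)\bigl[T(t_ia_i+ha_i)-T(t_ia_i)\bigr]x.
\]
I would obtain it by writing $T(t+ha_i)C=T(t)T(ha_i)$ for small $h$ in the interior and using $(x,y)\in{\mathcal A}_i$. This shows $u_{t_i;a_i}(t)=T(t)y\in{\mathcal A}_iT(t)x$, because $T(t)$ commutes with ${\mathcal A}_i$ (as in Remark \ref{konc}(i)(a)). Continuity of the derivatives gives $C^1$.

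Uniqueness, for injective $C$: take a solution $u$ with $u(0)=0$ and reduce one direction at a time. Along $a_1$ with the other coordinates frozen at $0$, the one-parameter uniqueness for ${\mathcal A}_1$, valid since $C$ is injective, gives $u(t_1a_1)=0$. Then, freezing $t_1$ and moving in $a_2$, and so on, I obtain $u\equiv0$, following the argument cited from \cite[Theorem 8.1.23]{apsclcs}.
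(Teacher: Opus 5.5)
\textbf{The main gap is in the existence half of (ii).} Your displayed formula for $u_{t_i;a_i}(t)$ drops a factor $C$. By Definition \ref{1.2.1.1}(i), $T(t-t_ia_i)T(t_ia_i+ha_i)=T(t+ha_i)C$, and the identity you then use, $T(t+ha_i)C=T(t)T(ha_i)$, also carries $C$ on the left. So your computation controls the difference quotients of $CT(\cdot)x$ and gives $\partial_{t_i}[CT(t)x]=CT(t)y$. It says nothing directly about $u=T(\cdot)x$, and concluding $u_{t_i;a_i}(t)=T(t)y$ requires cancelling $C$.

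When $C$ is injective this is easy. Then ${\mathcal A}_i$ is the integral generator of $T_i(\cdot)$, so applying $T(t-t_ia_i)$ to $T_i(t_i)x-Cx=\int_0^{t_i}T_i(r)y\,dr$ gives $C[T(t)x-T(t-t_ia_i)x]=C\int_0^{t_i}T(t-t_ia_i+ra_i)y\,dr$. Cancelling $C$ yields $u_{t_i;a_i}=T(\cdot)y\in{\mathcal A}_iu$ and $u\in C^1(\Omega^{\circ}:X)$.

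Without injectivity the step cannot be repaired, because the claim fails. Let $X={\mathbb C}^2$, $C=0$, $N\neq 0$ with $N^2=0$, and all $D_i=[0,+\infty)$. Set $T(t_1a_1+\dots+t_na_n):=t_1^2W(t_1)N$, with $W$ bounded, continuous and nowhere differentiable. Both sides of Definition \ref{1.2.1.1}(i) vanish and $T(0)=0=C$, so this is a $C$-semigroup. Since $T(ha_1)x/h=hW(h)Nx\to 0$ and $T(ha_j)=0$ for $j\geq 2$, every ${\mathcal A}_i$ equals $X\times X$. Yet $u(t)=T(t)x$ is nowhere differentiable in $t_1$ when $Nx\neq 0$. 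The paper omits this ``straightforward calculation''; the first part of (ii) genuinely needs $C$ injective.

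\textbf{Uniqueness.} When the $D_j$ are of the form $[0,\tau_j)$, the axis $\{t_1a_1\}$ lies in $\partial\Omega$. There $(ACP)_{\Omega}$ imposes neither differentiability nor the inclusions, so freezing the other coordinates at $0$ leaves you no equation to work with. Start from interior points instead. If the segment from $p$ to $p+sa_i$ lies in $\Omega^{\circ}$, differentiating $r\mapsto T_i(s-r)u(p+ra_i)$ gives $Cu(p+sa_i)=T_i(s)u(p)$. Chaining these from $p=\varepsilon(a_1+\dots+a_n)$ gives $C^nu(t)=T_1(t_1-\varepsilon)\cdots T_n(t_n-\varepsilon)u(\varepsilon(a_1+\dots+a_n))$. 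Letting $\varepsilon\to 0$ gives $C^nu(t)=0$ on $\Omega^{\circ}$, hence $u\equiv 0$ by continuity. Both the differentiation and the limit use local equicontinuity, which is automatic in Banach spaces.

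\textbf{Part (a).} The step you expected to be the main obstacle is purely algebraic. Pick $m$ with $t_i/m+t_i'<\tau_i$. Then $T_i(t_i/m)^m=T_i(t_i)C^{m-1}$ and $T_i(t_i/m)T_i(t_i')=T_i(t_i/m+t_i')C=T_i(t_i')T_i(t_i/m)$. Hence $C^{m-1}T_i(t_i)T_i(t_i')=C^{m-1}T_i(t_i')T_i(t_i)$, and injectivity finishes. This avoids your product-rule argument for $r\mapsto T_i(t_i-r)z(r)$, which in a general SCLCS needs local equicontinuity; the paper simply quotes the one-parameter result.

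\textbf{Minor slip.} $CT(t)=T(t)C$ comes from putting $t=0$, not $s=0$, in Definition \ref{1.2.1.1}(i).
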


\begin{proof}
Define $W(t_{1},...,t_{n}):=T(t_{1}a_{1}+...+t_{n}a_{n}),$ $t_{i}\in D_{i}$ ($1\leq i\leq n$); then it is clear that $(W(t))_{t\in D}$ is a $C$-semigroup, where $D:=D_{1}\times ... \times D_{n}.$ Using this observation and a simple argumentation, we may assume without loss of generality that $(a_{1},...,a_{n})=(e_{1},...,e_{n})$ and $D_{i}=[0,\tau_{i})$ for $1\leq i\leq n$.
 
Let $1\leq i, j\leq n$ and $i\neq j$. Then, due to (i.1), we have
\begin{align*}
T_{i}\bigl(t_{i}\bigr)T_{j}\bigl(t_{j}\bigr)=T\bigl(0,...,t_{i},...,t_{j},...,0\bigr)C=T_{j}\bigl(t_{j}\bigr)T_{i}\bigl(t_{i}\bigr).
\end{align*}
The functional equality \eqref{345} follows from (i.1) and a  simple computation. 

If the operator $C$ is injective, then \cite[Proposition 1.25]{knjiga} (see also the proof of  \cite[Lemma 2.1]{shaw2005}) implies $T_{i}(t_{i})T_{i}(t_{i}')=T_{i}(t_{i}')T_{i}(t_{i})$ for all $1\leq i\leq n$, $t_{i}\in [0,\tau_{i})$ and $t_{i}'\in [0,\tau_{i}).$ Let the points $t,\ s\in \Omega$ be fixed. Since $T(r)C=CT(r),$ $r\in \Omega,$ we have that  
$T(t)T(s)=T(s)T(t)$ if and only if $T(t)T(s)C^{2n-2}=T(s)T(t)C^{2n-2},$ i.e., if and only if
$$
\Bigl[T_{1}\bigl(t_{1}\bigr) \cdot ... \cdot T_{n}\bigl(t_{n}\bigr)\Bigr] \cdot \Bigl[ T_{1}\bigl(s_{1}\bigr) \cdot ... \cdot T_{n}\bigl(s_{n}\bigr)\Bigr]=\Bigl[ T_{1}\bigl(s_{1}\bigr) \cdot ... \cdot T_{n}\bigl(s_{n}\bigr)\Bigr] \cdot \Bigl[T_{1}\bigl(t_{1}\bigr) \cdot ... \cdot T_{n}\bigl(t_{n}\bigr)\Bigr].
$$
But, this equality simply follows from the equalities $T_{i}(t_{i})T_{j}(t_{j})=T_{j}(t_{j})T_{i}(t_{i})$ for all $1\leq i, j\leq n$ with $i\neq j$, $t_{i}\in [0,\tau_{i})$, $t_{j}\in [0,\tau_{j})$, and $T_{i}(t_{i})T_{i}(t_{i}')=T_{i}(t_{i}')T_{i}(t_{i})$ for all $1\leq i\leq n$, $t_{i}\in [0,\tau_{i})$ and $t_{i}'\in [0,\tau_{i}).$ This yields (a).

The  Laplace transform identity \eqref{345621} follows from \eqref{345}, the Fubini theorem and the equality
$$
\int^{+\infty}_{0}  e^{-\lambda_{i}t_{i}}T_{i}\bigl(t_{i}\bigr)x\, dt_{i}=\bigl( \lambda_{i}-{\mathcal A}_{i}\bigr)^{-1}Cx,\quad x\in X,\ \Re \lambda_{i}>\omega_{i} \ \ (1\leq i\leq n),
$$
which is a consequence of \cite[Theorem 3.2.5]{FKP}. This yields (b).

The first part of (ii) follows from a straightforward calculation and its proof is therefore omitted; the second part in (ii) is a very special consequence of \cite[Theorem 7.1.5]{apsclcs}.
\end{proof}

\section{Multiparameter $C$-cosine functions}\label{ccos}

We open this section by introducing the following notion:

\begin{defn}\label{1.2.1.1.1}
Let $0\in \Omega$ and $\Omega  \subseteq {\mathbb R}^{n}.$ A strongly continuous family
$(C(t))_{t\in \Omega}\subseteq L(X)$ 
is said to be a (local, if $\Omega$ is bounded) $C$-cosine function if the following conditions hold:
\begin{itemize}
\item[(i)] $C(t+s)C+C(t-s)C=2C(t)C(s)$ for all $t$, $s\in \Omega$ such that $t+s \in \Omega$ and $t-s\in \Omega$, and
\item[(ii)] $C(0)=C$.
\end{itemize}
If $\Omega=[0,+\infty)^{n},$ then it is said that $(C(t))_{t\in [0,+\infty)^{n}}$ is exponentially equicontinuous if there exist real numbers $\omega_{1},...,\omega_{n}$ such that for each seminorm $p\in \circledast$ there exist a real number $M>0$ and a seminorm $q\in \circledast$ such that $p(C(t)x)\leq M\exp(\omega_{1}t_{1}+...+\omega_{n}t_{n})q(x)$ for all $x\in X$ and $t=(t_{1},...,t_{n})\in [0,+\infty)^{n} $; if this is the case and $(X,\| \cdot \|)$ is a complex Banach space, then we also say that  $(C(t))_{t\in [0,+\infty)^{n}}$ is exponentially bounded.

Finally, if $C={\rm I}, $ then we also say that $(C(t))_{t\in \Omega}$ is a cosine function.
\end{defn}

If $n=1$ and $\Omega=[0,\tau)$ for some $0<\tau \leq +\infty,$ then a multivalued linear operator ${\mathcal A}$ which satisfies
\begin{itemize}
\item[(a)] $C(t){\mathcal A}\subseteq {\mathcal A}C(t)$, $t\in[0,\tau)$,
\item[(b)] $C(t)x-Cx=\int_0^t(t-s)C(s)y\,ds $,
$t\in[0,\tau)$, whenever $(x,y)\in {\mathcal A}$, 
\item[(c)] $(\int_0^t(t-s)C(s)x\,ds ,C(t)x-Cx )\in {\mathcal A}$ for all $x\in X$ and $t\in [0,\tau),$
\end{itemize}
is called a subgenerator of $(C(t))_{t\in[0,\tau)}$. In this case, the integral generator of $(C(t))_{t\in[0,\tau)}$ is defined by
\begin{align*}
&\hat{{\mathcal A}}:=\biggl\{(x,y)\in X\times X:C(t)x-Cx=\int_0^t(t-s)C(s)y\,ds,\;t\in[0,\tau)\biggr\},
\end{align*}
and it coincides with the infinitesimal generator of $(C(t))_{t\in[0,\tau)},$ which is defined by
\begin{align*}
{\mathcal A}_{inf} :=\Biggl\{ ( x,y) \in X \times X : 2\lim_{t\rightarrow 0}\frac{C(t)x-Cx}{t^{2}}=Cy\Biggr\}.
\end{align*}
We similarly define a subgenerator (the integral generator, the infinitesimal generator) of a $C$-cosine function $(C(t))_{t\in (-\tau,\tau)}$. Let us mention that
the conclusions from Remark \ref{konc}(ii) continue to hold for $C$-cosine functions.

Let $\tau_{i}\in (0,+\infty]$ for $1\leq i \leq n$, $\Omega =[0,\tau_{1}) \times ... \times [0,\tau_{n})$ or $\Omega =(-\tau_{1},\tau_{1}) \times ... \times (-\tau_{n},\tau_{n}),$ and let $(C(t))_{t\in \Omega}$ 
be a $C$-cosine function. Then it is easy to see that, for every $i\in {\mathbb N}_{n},$ the operator family $(C_{i}(t_{i})\equiv C(t_{i}e_{i}))_{0\leq t_{i} <\tau_{i}}$, resp. $(C_{i}(t_{i})\equiv C(t_{i}e_{i}))_{-\tau_{i}\leq t_{i} <\tau_{i}},$  is a local $C$-cosine function; furthermore, if 
$(C_{i}(t_{i}))_{0\leq t_{i} <\tau_{i}}$, resp. $(C_{i}(t_{i}))_{-\tau_{i}\leq t_{i} <\tau_{i}},$ is a local $C$-cosine function for some $i\in {\mathbb N}_{n}$, then it is easy to see that the expression 
$C(t_{1},...,t_{n}):=C_{i}(t_{i}),$ $t=(t_{1},...,t_{n})\in \Omega\equiv {\mathbb R} \times ...\times {\mathbb R} \times [0,\tau_{i}) \times ... \times {\mathbb R},$ where $\Omega = {\mathbb R} \times ...\times {\mathbb R} \times [0,\tau_{i}) \times ... \times {\mathbb R}$ or $\Omega ={\mathbb R} \times ...\times {\mathbb R} \times (-\tau_{i},\tau_{i}) \times ... \times {\mathbb R}$ determines a $C$-cosine function. 

\begin{example}\label{pas}
Let $(C(t))_{t\in {\mathbb R}}$ be a $C$-cosine function, and let $(a_{1},...,a_{n})\in {\mathbb R}^{n}.$ Define
$$
C_{a}\bigl(t_{1},...,t_{n}\bigr):=C\bigl( a_{1}t_{1}+...+a_{n}t_{n}\bigr),\quad t=\bigl(t_{1},...,t_{n}\bigr) \in {\mathbb R}^{n}.
$$
Then $(C_{a}(t))_{t\in {\mathbb R}^{n}}$ is a $C$-cosine function, and $(C_{a;i}(t_{i})\equiv C_{a}(t_{i}e_{i})=C(a_{i}t_{i}))_{t_{i} \in {\mathbb R}}$ is subgenerated by $a_{i}{\mathcal A}$ if $(C(t))_{t \in {\mathbb R}}$  is subgenerated by ${\mathcal A}$ ($1\leq i\leq n$). 
\end{example}

Our first structural result for multiparameter $C$-cosine functions reads as follows:

\begin{prop}\label{rambo}
Suppose that $0\in \Omega$, $\Omega  \subseteq {\mathbb R}^{n}$, $\Omega=-\Omega$  is convex, and $\Omega$ has a nonempty interior. If $(C(t))_{t\in \Omega}$ 
is a $C$-cosine function and the operator $C$ is injective, then $C(t)C(s)=C(s)C(t)$ for all $t,\ s\in \Omega.$
\end{prop}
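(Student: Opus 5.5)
The plan has three steps. First derive evenness of $C(\cdot)$ and its commutation with $C$ from the functional equation in Definition \ref{1.2.1.1.1}(i). Then obtain commutativity for small arguments directly from (i). Finally spread it to all of $\Omega$ by a doubling argument. Throughout, the hypotheses that $\Omega=-\Omega$ is convex are what make every point used below lie in $\Omega$. The nonempty interior should not really be needed for this argument.

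\emph{Step 1 (evenness and commutation with $C$).} Fix $s\in\Omega$. Since $\Omega=-\Omega$ and $0\in\Omega$, we may take $t=0$ in (i), which gives $C(s)C+C(-s)C=2CC(s)$. Replacing $s$ by $-s$ gives $C(-s)C+C(s)C=2CC(-s)$. Comparing the two identities yields $CC(s)=CC(-s)$, and injectivity of $C$ gives $C(-s)=C(s)$. Substituting back, $2C(s)C=2CC(s)$, so $C(s)C=CC(s)$ for all $s\in\Omega$.

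\emph{Step 2 (small arguments).} Let $t,s\in\frac12\Omega$. By convexity and symmetry, $t\pm s=\frac12(2t)+\frac12(\pm 2s)\in\Omega$. Applying (i) to the pair $(t,s)$ and to the pair $(s,t)$, and using evenness $C(t-s)=C(s-t)$, we get
$$
2C(t)C(s)=C(t+s)C+C(t-s)C=C(s+t)C+C(s-t)C=2C(s)C(t).
$$

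\emph{Step 3 (doubling).} We show: if $t,s\in\Omega$ with $2t\in\Omega$ and $C(t)C(s)=C(s)C(t)$, then $C(2t)C(s)=C(s)C(2t)$. Taking $s=t$ in (i) with $t$ in place of both arguments gives $C(2t)C=2C(t)^2-C^2$. The right-hand side commutes with $C(s)$, using Step 1 for the factor $C^2$. Using Step 1 again to move $C$ past $C(s)$, we obtain
$$
C\,C(s)C(2t)=C(s)C(2t)C=C(2t)C\,C(s)=C\,C(2t)C(s),
$$
and injectivity of $C$ gives the claim. By symmetry the same holds with the roles of the two arguments exchanged.

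Now take arbitrary $t,s\in\Omega$ and choose $k$ such that $t'=2^{-k}t$ and $s'=2^{-k}s$ lie in $\frac12\Omega$; any $k\geq 1$ works, by convexity and $0\in\Omega$. For $0\le j\le k$, the points $2^jt'$ and $2^js'$ are convex combinations of $0$ with $t$ or $s$, hence lie in $\Omega$. Starting from Step 2 for $(t',s')$, apply Step 3 $k$ times in the first variable to reach $(t,s')$, then $k$ times in the second variable to reach $(t,s)$. This gives $C(t)C(s)=C(s)C(t)$.

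The main obstacle is Step 3: we can only cancel $C$ after knowing that $C$ commutes with every $C(r)$, so Step 1 must come first, and one must track carefully on which side the factor $C$ sits. The domain bookkeeping (all $2^jt'\in\Omega$) is routine by convexity.
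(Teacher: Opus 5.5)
Your proof is correct, and it takes a somewhat different and more economical route than the paper's.

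Your Step 2 is the same as the paper's first step: the functional equation applied to $(t,s)$ and to $(s,t)$, together with $C(s-t)=C(t-s)$. The paper gets this evenness by restricting to the one-dimensional local $C$-cosine function $(C(r(t-s)))_{r\in[-1,1]}$ and citing the one-dimensional theory. You instead derive both evenness and $C(s)C=CC(s)$ directly from the equation at $t=0$. The paper uses $C(s)C=CC(s)$ only tacitly, when it commutes $C(t)$ past $p_{k}(C(s/2^{k}))$, which involves powers of $C$.

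The real difference is in how commutativity is spread to all of $\Omega$.

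\begin{itemize}
\item The paper fixes $t$ and halves only $s$, expressing $C(s)C^{c_{k}}$ through $C(s/2^{k})$ and $C$. This needs $t\pm s/2^{k}\in\Omega$, so $t$ must first lie in $\Omega^{\circ}$. Boundary points are then handled by strong continuity and the density of $\Omega^{\circ}$ in $\overline{\Omega}$.
\item You halve both arguments at once, so $(t\pm s)/2\in\Omega$ follows from convexity and $\Omega=-\Omega$ alone. You then undo the halving with one application of $C(2r)C=2C(r)^{2}-C^{2}$ in each variable; indeed $k=1$ already suffices.
\end{itemize}

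Your argument is therefore purely algebraic. It shows that neither the assumption $\Omega^{\circ}\neq\emptyset$ nor the strong continuity of $(C(t))_{t\in\Omega}$ is needed for this proposition. In this setting the paper's route gains nothing beyond reusing the known one-dimensional evenness result.
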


\begin{proof}
Suppose first that $t,\ s,\ t\pm s\in \Omega.$ Since $\Omega=-\Omega$, we have $s-t\in \Omega$ and the functional equality of $C$-cosine functions
gives 
\begin{align}\label{vera}
C(t+s)C+C(t-s)C=2C(t)C(s)\mbox{ and }C(t+s)C+C(s-t)C=2C(s)C(t).
\end{align}
On the other hand, the segment $[s-t,t-s]$ belongs to $\Omega$ since $[s-t,t-s]=[s-t,0] \cup [0,t-s]$ and $\Omega$ is a convex set. Since $C$ is injective and the operator family $(C(r(t-s)))_{r\in [-1,1]}$ is a local $C$-cosine function, it follows that $C((-1)(t-s))=C(t-s),$ i.e., $C(s-t)=C(t-s);$ see, e.g., \cite{tajw}. Keeping in mind \eqref{vera}, it follows that $C(t)C(s)=C(s)C(t)$.

Suppose now that $t,\ s\in \Omega.$ Then there exist two possibilites: $t\in \Omega^{\circ}$ or $t\in \partial \Omega.$ In the first case, we have $s/2=(1/2) \cdot 0+(1/2) \cdot s \in \Omega$ by the convexity of $\Omega$; inductively, we get $s/2^{k}\in  \Omega$ for all $k\in {\mathbb N}$, and we can write
\begin{align*}
C(s)C=2C(s/2)^{2}-C^{2}; \ \ C(s)C^{3}=2\Bigl[ 2C(s/4)^{2}-C^{2}\Bigr]^{2}-C^{5}=....
\end{align*}
Therefore, for every $k\in {\mathbb N}$, there exist an integer $c_{k}\in {\mathbb N}$ and a real polynomial $p_{k}(\cdot)$ such that
\begin{align}\label{ram}
C(s)C^{c_{k}}=p_{k}\Bigl(C\bigl(s/2^{k}\bigr)\Bigr),\quad k\in {\mathbb N}.
\end{align}
By the foregoing, we have $C(t)C(s/2^{k})=C(s/2^{k})C(t)$ for a sufficiently large integer $k\in {\mathbb N}$, so that \eqref{ram} together with the injectivity of $C$ implies $C(t)C(s)=C(s)C(t)$,
as claimed.
If $t\in \partial \Omega,$ then the equality $C(t)C(s)=C(s)C(t)$ follows from the previously proved case combined with the strong continuity of $(C(t))_{t\in \Omega} $ and the fact that $\Omega^{\circ}$ is dense in $\overline{\Omega},$ which follows from our assumptions that $\Omega  $ is convex and $\Omega^{\circ} \neq \emptyset$.  
\end{proof}

In the one-dimensional setting, in the definition of a $C$-cosine function $(C(t))_{t\in [0,\tau)},$ where $0<\tau \leq +\infty,$ it is assumed that $C(t+s)C+C(|t-s|)C=2C(t)C(s)$, provided that $0\leq t,\ s,\ t+s<\tau;$ see \cite[p. 5]{tajw}. This immediately implies that  $C(t)C(s)=C(s)C(t)$ for $0\leq t,\ s,\ t+s<\tau .$ In the multiparameter case $n\geq 2,$ we cannot define the term $C(|t-s|)$ and the equality $C(t)C(s)=C(s)C(t)$ does not follow directly for $t,\ s,\ t+s\in \Omega $. In connection with this observation, we would like to state  the following results, which can be deduced with the help of argumentation contained in the proof of Proposition \ref{rambo}:

\begin{prop}\label{df}
\begin{itemize}
\item[(i)] 
Suppose that $0\in \Omega$, $\Omega  \subseteq {\mathbb R}^{n}$ is convex, $\Omega$ has a nonempty interior, and $(C(t))_{t\in \Omega}$ 
is a $C$-cosine function. If the operator $C$ is injective and $C(t)C(s)=C(s)C(t)$ for any $t,\ s\in \Omega$ such that $t+s \in \Omega$, then $C(t)C(s)=C(s)C(t)$ for any $t,\ s\in \Omega$.
\item[(ii)] Suppose that $0\in \Omega$, $\Omega  \subseteq {\mathbb R}^{n}$ is convex, the operator $C$ is injective, $(C(t))_{t\in \Omega}$ 
is a $C$-cosine function and $C(t)C(s)=C(s)C(t)$ for any $t,\ s\in \Omega$. Define $\Omega':=\Omega \cup (-\Omega)$ and $\overline{C} : \Omega' \rightarrow L(X)$ by $\overline{C}(t):=C(t)$ for all $t\in \Omega$, and $\overline{C}:=C(-t)$ for all $t\in -\Omega \setminus \Omega.$ Then $(\overline{C}(t))_{t\in \Omega'}$ is a $C$-cosine function.
\end{itemize}
\end{prop}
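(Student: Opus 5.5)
For part (i) I would reuse the halving argument from the second half of the proof of Proposition \ref{rambo}. Fix $t,\ s\in \Omega$ and assume first that $t\in \Omega^{\circ}$. By convexity and $0\in\Omega$, every point $s/2^{k}$ lies in $\Omega$. Because $t$ is interior, $t+s/2^{k}\in \Omega$ for all sufficiently large $k$. The hypothesis then gives $C(t)C(s/2^{k})=C(s/2^{k})C(t)$ for such $k$. Next I need the identity \eqref{ram}, $C(s)C^{c_{k}}=p_{k}(C(s/2^{k}))$. It comes from the duplication formula $C(2r)C+C(0)C=2C(r)^{2}$, which is the cosine equation with $t=s=r$. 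This requires only $r,\ 2r,\ 0\in\Omega$, and for $r=s/2^{j}$ these hold by convexity, so no symmetry of $\Omega$ is used. Since $C(t)$ commutes with $C$ (take $s=0$ in the functional equation), it follows that $C(t)C(s)C^{c_{k}}=C(s)C(t)C^{c_{k}}$. Injectivity of $C$ then yields $C(t)C(s)=C(s)C(t)$. For $t\in\partial\Omega\cap\Omega$, I would approximate $t$ by interior points, using density of $\Omega^{\circ}$ in $\overline{\Omega}$ (convexity together with nonempty interior) and strong continuity.

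For part (ii) I would check the functional equation for $\overline{C}$ on $\Omega'$, so take $t,\ s,\ t\pm s\in \Omega'$. Two facts are used throughout. First, $\overline{C}(-r)=\overline{C}(r)$ whenever $r,\ -r\in\Omega'$; this holds by construction when exactly one of $r,\ -r$ lies in $\Omega$. When both lie in $\Omega$, the segment $[-r,r]\subseteq\Omega$ by convexity, and the one-parameter local $C$-cosine function $(C(\rho r))_{\rho\in[-1,1]}$ is even when $C$ is injective, as used in Proposition \ref{rambo}. Second, all the operators $\overline{C}(r)$ commute with each other, by the commutativity hypothesis. Using the evenness, I would reduce the possible sign configurations: replacing $(t,s)$ by $(-t,-s)$ or $(t,-s)$ leaves the equation invariant, since the right side $2\overline{C}(t)\overline{C}(s)$ is unchanged and the left side only permutes or negates its arguments. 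So I may assume $t\in\Omega$ and $s\in\Omega$ whenever one of the four sign choices allows this, and in that case I would apply the original identity.

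The main obstacle is the configuration where $t\pm s\in\Omega'$ but no choice of signs places $t$, $s$, $t+s$ and $t-s$ all in $\Omega$. For example, $t\in\Omega$, $-s\in\Omega$ and $t+s\in-\Omega$ could occur. I would first use convexity to handle this: here $t+s\in -\Omega$ and $-s\in\Omega$, so midpoints such as $t/2=\frac{1}{2}(t+s)+\frac{1}{2}(-s)$ can be placed appropriately. I would then apply the halving/polynomial device again, expressing both sides multiplied by a power of $C$ in terms of $\overline{C}$ at halved arguments that do lie in one copy of $\Omega$. Commutativity and injectivity of $C$ would let me cancel the extra powers of $C$. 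If direct casework fails there, I would fall back on that argument.
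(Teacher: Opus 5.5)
Your part (i) is correct and is the halving argument of Proposition \ref{rambo} that the paper refers to, with one misattribution. Putting $s=0$ in the functional equation only gives the tautology $2C(t)C=2C(t)C$. The commutation $C(t)C=CC(t)$ comes instead from the commutativity hypothesis of (i) applied with $s=0$, which is admissible since $t+0\in\Omega$. You need this commutation twice: to iterate $C(2r)C=2C(r)^{2}-C^{2}$ into \eqref{ram}, and to move $C^{c_{k}}$ to the left before using injectivity.

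The genuine gap is in (ii). Verifying the functional equation on $\Omega'$ is the whole content there, and you leave the decisive case open. Your fallback is an unspecified ``halving/polynomial'' device with no mechanism for producing the addition formula. In fact no configuration escapes the original identity, but seeing this requires two things your proposal lacks.
\begin{itemize}
\item[(a)] Besides sign changes, you must allow the swap $(t,s)\mapsto(s,t)$, which is legitimate by commutativity and evenness of $\overline{C}$. Sign changes alone do not suffice, already for $n=1$, $\Omega=[0,1]$, $t=1/5$, $s=1/2$.
\item[(b)] Convexity settles every configuration via midpoints. Pick $P\in\{\pm(t+s)\}\cap\Omega$ and $Q\in\{\pm(t-s)\}\cap\Omega$. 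Then $a:=(P+Q)/2\in\Omega$ equals $t$, $s$, $-s$ or $-t$ according to the four sign choices. Also $(P-Q)/2$ is $\pm$ the other variable, so one of $\pm(P-Q)/2$, call it $b$, lies in $\Omega$. Now $a,b\in\Omega$ and $\{a+b,a-b\}=\{P,Q\}\subseteq\Omega$. Apply the original identity at $(a,b)$, and use $\overline{C}(t+s)=C(P)$, $\overline{C}(t-s)=C(Q)$ and $\overline{C}(t)\overline{C}(s)=C(a)C(b)$ (evenness and commutativity). This is exactly the required identity at $(t,s)$.
\end{itemize}
Your own example ($t,-s\in\Omega$, $-(t+s)\in\Omega$) is handled this way. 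If $t-s\in\Omega$, take $(a,b)=(-s,t)$. If $s-t\in\Omega$, then $-t=\frac{1}{2}[(-t-s)+(s-t)]\in\Omega$ and $(a,b)=(-t,-s)$ works.

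Finally, you never check that $\overline{C}$ is strongly continuous on $\Omega'$. This is not automatic at a point $r\notin\Omega$ with $-r\in\Omega$ that is a limit of points $r_{m}\in\Omega$ with $-r_{m}\notin\Omega$. There, continuity of $C$ on $\Omega$ does not by itself give $C(r_{m})x\to C(-r)x$, so some argument is needed. The paper, which only points back to Proposition \ref{rambo}, does not address this either.
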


Suppose now that $(X,\| \cdot \|)$ is a complex Banach space, $\Omega=[0,+\infty)^{n}$ and $(T(t))_{t\in [0,+\infty)^{n}}$ is a strongly continuous semigroup. Then the equality \eqref{345} shows that $(T(t))_{t\in [0,+\infty)^{n}}$ is exponentially bounded.
In the subsequent result, we will prove the same 
result for cosine functions and therefore 
extend the statement of \cite[Lemma 3.14.3; a)]{a43} to the higher-dimensional setting:

\begin{prop}\label{cos}
If $(X,\| \cdot \|)$ is a complex Banach space and $(C(t))_{t\in [0,+\infty)^{n}}$ is a cosine function, then $(C(t))_{t\in [0,+\infty)^{n}}$ is exponentially bounded.
\end{prop}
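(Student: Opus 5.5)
The plan is to reduce the statement to the one-dimensional result \cite[Lemma 3.14.3; a)]{a43}, which says that every strongly continuous cosine function on $[0,+\infty)$ in a Banach space satisfies $\|C(t)\|\leq Me^{\omega t}$. The reduction goes through the one-parameter families along rays, $C_{v}(r):=C(rv)$ for $r\geq 0$, where $v\in[0,+\infty)^{n}$. Since $[0,+\infty)^{n}$ is a convex cone, $rv\pm r'v=(r\pm r')v$ lies in it whenever $r\geq r'\geq 0$. Taking $C={\rm I}$ in Definition \ref{1.2.1.1.1}(i) then gives $C_{v}(r+r')+C_{v}(r-r')=2C_{v}(r)C_{v}(r')$ for $r\geq r'\geq 0$. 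For a cosine function the operators commute ($C$ is injective, and we are in the one-sided setting), so this identity with $|r-r'|$ is exactly the one-dimensional cosine functional equation on $[0,+\infty)$ in the sense of \cite[p. 5]{tajw}. Strong continuity and $C_{v}(0)={\rm I}$ are inherited.

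The first step is therefore to show that each $(C_{v}(r))_{r\geq 0}$ is a cosine function on $[0,+\infty)$. The standard extension $C_{v}(-r):=C_{v}(r)$ then produces a cosine function on ${\mathbb R}$. In the one-sided definition the case $r<r'$ is handled automatically by symmetry, so this step should be routine.

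The second step is to make the exponential bound uniform over directions. The Lemma in \cite{a43} is proved by applying the uniform boundedness principle on $[0,1]$ to get $\sup_{0\leq r\leq 1}\|C_{v}(r)\|=:M_{v}<\infty$, and then iterating the duplication formula $C_{v}(2r)=2C_{v}(r)^{2}-{\rm I}$. This yields $\|C_{v}(r)\|\leq M'e^{\omega r}$, with $\omega$ depending only on $M_{v}$ (for instance $\omega=\log(2M_{v}+1)$ or similar). It therefore suffices to control $M:=\sup\{\|C(t)\|: t\in[0,1]^{n}\}$. This is finite by the uniform boundedness principle, because $[0,1]^{n}$ is compact and $t\mapsto C(t)x$ is continuous for each $x$. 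Every $t\in[0,+\infty)^{n}$ with $|t|_{\infty}=r$ can be written as $t=rv$ with $v\in[0,1]^{n}$, so $\sup_{0\leq s\leq 1}\|C_{v}(s)\|\leq M$ for all such $v$. Running the one-dimensional argument with this common constant gives $\|C(t)\|\leq M'e^{\omega|t|_{\infty}}\leq M'e^{\omega(t_{1}+\dots+t_{n})}$, which is exponential boundedness with $\omega_{1}=\dots=\omega_{n}=\omega$.

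The main obstacle is making sure the constants in \cite[Lemma 3.14.3]{a43} really depend only on $\sup_{[0,1]}\|C_{v}\|$. To secure this I would redo the short argument directly rather than cite it. Set $m(r):=\sup_{s\leq r}\|C_{v}(s)\|$. From $C_{v}(2s)=2C_{v}(s)^{2}-{\rm I}$ and $C_{v}(s+s')=2C_{v}(s)C_{v}(s')-C_{v}(s-s')$, an induction over the intervals $[k,k+1]$ gives the linear-recursive estimate $m(k+1)\leq 2Mm(k)+m(k-1)$. This is bounded by $c\lambda^{k}$, where $\lambda$ is the positive root of $\lambda^{2}=2M\lambda+1$, and every constant depends only on $M$. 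This route avoids any Laplace transform or generator theory and works uniformly in $v$.
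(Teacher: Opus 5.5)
Your argument is correct, but it takes a different route from the paper's. The paper works directly on $[0,+\infty)^{2}$ with the coordinate translations $t\mapsto t\pm e_{i}$. It sets $M=\sup_{t\in[0,2]^{2}}\|C(t)\|$ and chooses anisotropic rates $\omega_{i}\geq 0$ with $2\|C(e_{i})\|e^{-\omega_{i}}+e^{-2\omega_{i}}\leq 1$. It then proves $\|C(t)\|\leq Me^{\omega_{1}t_{1}+\omega_{2}t_{2}}$ by induction over the cubes $[0,k]^{2}$. Each step first pushes the estimate one unit in $t_{1}$ via $C(t+e_{1})=2C(t)C(e_{1})-C(t-e_{1})$, then one unit in $t_{2}$; the case $n>2$ is left to the reader.

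You instead restrict to rays $r\mapsto C(rv)$ and normalize by $|t|_{\infty}$, so every ray inherits the common bound $\sup_{[0,1]^{n}}\|C\|$ on $[0,1]$. You then run the one-dimensional three-term recursion uniformly in the direction. This gives the isotropic bound $M'e^{\omega|t|_{\infty}}$, which implies the required form with $\omega_{1}=\dots=\omega_{n}=\omega\geq 0$.

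What each route buys:
\begin{itemize}
\item Yours is dimension-free. It needs no bookkeeping of which translates stay in the domain, since on a ray $t-s\in\Omega$ is automatic for $r\geq r'$.
\item Yours works verbatim on any closed cone. In particular it gives Remark \ref{renik}(i) immediately: normalize by the Euclidean norm and use $|t|\leq t_{1}+\dots+t_{n}$ for $t=t_{1}a_{1}+\dots+t_{n}a_{n}$ with a normed basis.
\item The paper's route yields direction-dependent exponents tied to $\|C(e_{i})\|$. These can be sharper than a single $\omega$.
\end{itemize}

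One remark in your first step is not justified as written. Injectivity of $C={\rm I}$ does not by itself give commutativity here. The paper stresses that on $[0,+\infty)^{n}$ commutativity does not follow directly from the definition, and Proposition \ref{rambo} needs $\Omega=-\Omega$.

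Along a single ray commutativity is nevertheless true. The identity $C_{v}((m+1)u)=2C_{v}(mu)C_{v}(u)-C_{v}((m-1)u)$ uses only the larger argument first, and it shows that $C_{v}(mu)$ is a polynomial in $C_{v}(u)$. Hence $C_{v}(r)$ and $C_{v}(r')$ commute whenever $r/r'\in\mathbb{Q}$, and strong continuity handles the rest.

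You do not actually need this. The self-contained recursion in your last paragraph uses only the functional equation with the larger argument first, together with the duplication formula for the interval $[1,2]$. So you can drop the detour through the symmetric one-dimensional definition and the cited lemma.
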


\begin{proof}
We will prove the statement in the two-dimensional setting; the general case can be covered similarly. Put $M:=\sup_{t\in [0,2]^{2}}\| C(t)\|<+\infty$ and assume that the real numbers $\omega_{i}\geq 0$ satisfy $2\| C(e_{i})\| e^{-\omega_{i}}+e^{-2\omega_{i}}\leq 1$ for $i=1,2.$ We claim that $\| C(t_{1},t_{2})\| \leq M\exp(\omega_{1}t_{1}+\omega_{2}t_{2})$ for all $(t_{1},t_{2}) \in [0,+\infty)^{2}.$  This is clearly true if $(t_{1},t_{2}) \in [0,2]^{2};$ let us assume that this is true for any point $(t_{1},t_{2}) \in [0,k]^{2}$, where $k\in {\mathbb N}$ and $k\geq 2,$ ans let us prove that this is also true for any point $(t_{1},t_{2}) \in [0,k+1]^{2}.$ If  $(t_{1},t_{2}) \in [1,k] \times [0,k],$ then the induction hypothesis implies:
\begin{align*}&
\bigl\| C(t+e_{1})\bigr\| =\bigl\| 2C(t)C(e_{1})-C(t-e_{1})\bigr\| \leq  2M\bigl\| C(e_{1})\bigr\|e^{\omega_{1}t_{1}+\omega_{2}t_{2}}+M\bigl\| C(t-e_{1}) \bigr\| 
\\& \leq 2M\bigl\| C(e_{1})\bigr\|e^{\omega_{1}t_{1}+\omega_{2}t_{2}}+Me^{\omega_{1}(t_{1}-1)+\omega_{2}t_{2}}
\\& \leq Me^{\omega_{1}(t_{1}+1)+\omega_{2}t_{2}}\Bigl[ 2\| C(e_{1})\| e^{-\omega_{1}}+e^{-2\omega_{1}} \Bigr] \leq Me^{\omega_{1}(t_{1}+1)+\omega_{2}t_{2}}.
\end{align*} 
The above implies that, for every $ (t_{1},t_{2}) \in [0,k+1] \times [0,k],$ we have:
\begin{align*}&
\bigl\| C(t+e_{2})\bigr\| =\bigl\| 2C(t)C(e_{2})-C(t-e_{2})\bigr\| \leq  2M\bigl\| C(e_{2})\bigr\|e^{\omega_{1}t_{1}+\omega_{2}t_{2}}+M\bigl\| C(t-e_{2}) \bigr\| 
\\& \leq 2M\bigl\| C(e_{2})\bigr\|e^{\omega_{1}t_{1}+\omega_{2}t_{2}}+Me^{\omega_{1}t_{1}+\omega_{2}(t_{2}-1)}
\\& \leq Me^{\omega_{1}t_{1}+\omega_{2}(t_{2}+1)}\Bigl[2\| C(e_{2})\| e^{-\omega_{2}}+e^{-2\omega_{2}} \Bigr] \leq Me^{\omega_{1}t_{1}+\omega_{2}(t_{2}+1)},
\end{align*} 
which completes the proof of result.
\end{proof}

\begin{rem}\label{renik}
\begin{itemize}
\item[(i)] Suppose that $(X,\| \cdot \|)$ is a complex Banach space, $(a_{1},...,a_{n})$ is a normed basis of ${\mathbb R}^{n}$, $D_{i}=[0,+\infty)$ for all $i\in {\mathbb N}_{n},$ and $(T(t))_{t\in \Omega}$ is a strongly continuous semigroup of operators, where  $\Omega$ is given by \eqref{me1}.   
In \cite[Proposition 9.1.1]{apsclcs}, we have proved that there exist real numbers $M\geq 1$ and $\omega_{i}\in {\mathbb R}$ ($1\leq i \leq n$) such that, for every $(t_{1},...,t_{n})\in [0,+\infty)^{n},$ we have
\begin{align}\label{maww}
\bigl \| T(t)\bigr\| \leq Me^{\omega_{1}t_{1}+...+\omega_{n}t_{n}},\mbox{ provided that } t=t_{1}a_{1}+...+t_{n}a_{n} \in \Omega.
\end{align}
Using a similar argumentation as in the proof of above-mentioned result, we can slightly extend Proposition \ref{cos} and prove that the estimate \eqref{maww} holds if $(T(t))_{t\in \Omega}$ is replaced by a cosine function $(C(t))_{t\in \Omega}$.
\item[(ii)] If $C\neq {\rm I} $ and $\Omega=[0,+\infty)$, then it is well-known that there are many examples of not exponentially bounded $C$-semigroups and $C$-cosine functions; see, e.g., \cite{knjiga}. 
\end{itemize}
\end{rem}

In connection with Proposition \ref{cos}, the following question arises immediately: If $\Omega=[0,+\infty)^{n}$ and $(C(t))_{t\in [0,+\infty)^{n}}$ is exponentially equicontinuous, how we can compute its Laplace transform, i.e., what is the value of term
\begin{align*}
\int^{+\infty}_{0}...\int^{+\infty}_{0}e^{-\lambda_{1}t_{1}-...-\lambda_{n}t_{n}}C\bigl(t_{1},...,t_{n}\bigr)x\, dt_{1}\, ...\, dt_{n},
\end{align*}
where $x\in X$ is given? The result is very simple in the case that $(C(t))_{t\in [0,+\infty)^{n}}$ is given by \eqref{3456}, when the class of multiparameter $C$-cosine functions is a very special subclass of the class of regularized resolvent families introduced in \cite[Definition 2.1(iii)]{aims}. 
In the case that $n\geq 2$ and $(C(t))_{t\in [0,+\infty)^{n}}$ is not given by \eqref{3456}, the method proposed in the proof of \cite[Proposition 3.14.4]{a43} is not applicable and the best we can do is to prove the following unsatisfactory result:

\begin{prop}\label{asd}
Suppose that $\Omega={\mathbb R}^{n}$, $(C(t))_{t\in {\mathbb R}^{n}}$ is a $C$-cosine function and there exist real numbers $\omega_{1},...,\omega_{n}$ such that for each seminorm $p\in \circledast$ there exist a real number $M>0$ and a seminorm $q\in \circledast$ such that $p(C(t)x)\leq M\exp(\omega_{1}|t_{1}|+...+\omega_{n}|t_{n}|)q(x)$ for all $x\in X$ and $t=(t_{1},...,t_{n})\in {\mathbb R}^{n}.$ Put 
$$
R(\lambda ;s)x:=\int_{s+[0,+\infty)^{n}}e^{-\lambda_{1}t_{1}-...-\lambda_{n}t_{n}}C\bigl(t_{1},...,t_{n}\bigr)x\, dt_{1}\, ...\, dt_{n},
$$
for $ s\in {\mathbb R}^{n},\ \lambda =(\lambda_{1},...,\lambda_{n}) \in {\mathbb C}^{n},\ x\in X$ and $\Re \lambda_{i} >\omega_{i}\  (1\leq i\leq n).$
Then we have
\begin{align}\notag
2R(\lambda;0)R(\nu;0) x &=\int_{[0,+\infty)^{n}}e^{-(\lambda_{1}+\nu_{1})s_{1}-...-(\lambda_{n}+\nu_{n})s_{n}}R(\lambda;s)Cx\, ds_{1}\, ...\, ds_{n} 
\\& \label{klasa}+\int_{[0,+\infty)^{n}}e^{-(\nu_{1}-\lambda_{1})s_{1}-...-(\nu_{n}-\lambda_{n})s_{n}}R(\lambda;-s)Cx\, ds_{1}\, ...\, ds_{n},
\end{align}
provided that $t=(t_{1},...,t_{n}),\ s=(s_{1},...,s_{n})\in {\mathbb R}^{n},\ \lambda =(\lambda_{1},...,\lambda_{n}) \in {\mathbb C}^{n},$ $\nu =(\nu_{1},...,\nu_{n}) \in {\mathbb C}^{n},$ $x\in X$ and $\min(\Re \lambda_{i},\Re \nu_{i}) >\omega_{i}\  (1\leq i\leq n).$
\end{prop}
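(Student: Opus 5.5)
The plan is to expand the product of the two Laplace-type integrals into a double integral over $[0,+\infty)^{n}\times[0,+\infty)^{n}$. We then apply the functional equation of Definition \ref{1.2.1.1.1}(i) pointwise and split the result into two integrals. In each of these we make a linear change of variables in the inner integral, so that it becomes a shifted integral $R(\lambda;\pm s)Cx$.

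\textbf{Well-definedness and Fubini.} For each $p\in\circledast$ we have
$$
p\bigl(e^{-\lambda\cdot t}C(t)x\bigr)\leq M e^{\sum_{i}(\omega_{i}-\Re\lambda_{i})|t_{i}|}q(x)
$$
on $s+[0,+\infty)^{n}$. The right-hand side is integrable because $\Re\lambda_{i}>\omega_{i}$. Hence $R(\lambda;s)x$ exists in the SCLCS $X$, and $R(\lambda;s)\in L(X)$ with $p(R(\lambda;s)x)\leq M'e^{c|s|}q(x)$. Moving $R(\lambda;0)$ inside the $\nu$-integral is justified by its continuity. Writing
$$
2R(\lambda;0)R(\nu;0)x=\int_{[0,+\infty)^{n}}\int_{[0,+\infty)^{n}}e^{-\lambda\cdot t-\nu\cdot s}\,2C(t)C(s)x\,dt\,ds
$$
then follows by the Fubini theorem for vector-valued integrals, applied seminorm by seminorm. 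The joint integrand is dominated by $e^{\sum_{i}(\omega_{i}-\Re\lambda_{i})t_{i}+\sum_{i}(\omega_{i}-\Re\nu_{i})s_{i}}$. This domination uses the equicontinuity estimate twice, first for $C(s)$ and then for $C(t)$.

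\textbf{Functional equation and substitution.} Since $\Omega={\mathbb R}^{n}$, we may replace $2C(t)C(s)x$ by $C(t+s)Cx+C(t-s)Cx$ for all $t,s$. In the first piece we fix $s$ and put $r=t+s$. Then $r$ ranges over $s+[0,+\infty)^{n}$ and $e^{-\lambda\cdot t}=e^{-\lambda\cdot r}e^{\lambda\cdot s}$. The inner integral becomes $e^{\lambda\cdot s}R(\lambda;s)Cx$, and so the outer weight is $e^{-(\nu-\lambda)\cdot s}$. In the second piece we put $r=t-s$. Then $r$ ranges over $-s+[0,+\infty)^{n}$ and $e^{-\lambda\cdot t}=e^{-\lambda\cdot r}e^{-\lambda\cdot s}$. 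This gives $e^{-\lambda\cdot s}R(\lambda;-s)Cx$ with outer weight $e^{-(\nu+\lambda)\cdot s}$.

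\textbf{Main obstacle: convergence, and the mismatch with \eqref{klasa}.} The main analytic point is the convergence of the outer integrals after substitution. The growth bound on $R(\lambda;\pm s)$, namely $e^{\sum_{i}(\omega_{i}+|\Re\lambda_{i}|)|s_{i}|}$, is not obviously beaten by the weight $e^{-(\nu-\lambda)\cdot s}$. I would therefore avoid estimating $R(\lambda;s)$ separately. Instead I would keep the pieces as absolutely convergent double integrals. Each piece is bounded by the original dominated integrand, since $p(C(t\pm s)Cx)\leq M e^{\sum_{i}\omega_{i}(t_{i}+s_{i})}q(Cx)$. I would then apply Fubini only after the change of variables, so the outer integrals converge for all $\min(\Re\lambda_{i},\Re\nu_{i})>\omega_{i}$.

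The bookkeeping above pairs $e^{-(\nu-\lambda)\cdot s}$ with $R(\lambda;s)Cx$ and $e^{-(\lambda+\nu)\cdot s}$ with $R(\lambda;-s)Cx$. This is the reverse of the pairing printed in \eqref{klasa}. The check $C(t)\equiv{\rm I}$ confirms it: there both sides equal $2\prod_{i}(\lambda_{i}\nu_{i})^{-1}$ only with this pairing. So as worded, \eqref{klasa} does not follow from this argument and appears to contain a typo. What the method establishes is \eqref{klasa} with the two exponential weights interchanged.
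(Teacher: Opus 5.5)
Your argument is the paper's own (expand the product into a double integral by Fubini, insert the functional equation, translate the inner variable), and your pairing of the weights is the correct one. The paper's intermediate line $\int_{[0,+\infty)^{n}}e^{-\nu s}\bigl[\int_{s+[0,+\infty)^{n}}e^{-\lambda(v-s)}C(v)Cx\,dv\bigr]\,ds$ already gives $e^{-(\nu-\lambda)\cdot s}$ against $R(\lambda;s)Cx$, and the other piece gives $e^{-(\lambda+\nu)\cdot s}$ against $R(\lambda;-s)Cx$. So the interchange in \eqref{klasa} is a slip in the last equality of the paper's proof, exactly as your test with $C(t)\equiv{\rm I}$ shows.

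Your extra convergence bookkeeping, which the paper skips, is sound apart from one small point. The bound $p(C(t-s)Cx)\leq Me^{\sum_{i}\omega_{i}(t_{i}+s_{i})}q(Cx)$ needs $\omega_{i}\geq 0$. However, if some $\omega_{i}<0$, then letting $s=\tau e_{i}$, $\tau\to+\infty$, in $C(r+2s)C+C(r)C=2C(r+s)C(s)$ forces $C(r)C=0$ for all $r$. In that case both sides of the identity vanish, so nothing is lost.
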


\begin{proof}
Let $x\in X$ and let the prescribed assumptions hold.
Integrating the both sides of functional equality $C(t+s)C+C(t-s)C=2C(t)C(s)$, $t,\ s\in {\mathbb R}^{n},$ we obtain with the help of Fubini theorem that:
\begin{align*}
2R(\lambda;0)R(\nu;0)x &=\int_{[0,+\infty)^{n}}\int_{[0,+\infty)^{n}}e^{-\lambda t}e^{-\nu s}\bigl[ C(t+s)Cx+C(t-s)Cx \bigr]\, dt\, ds
\\& = \int_{[0,+\infty)^{n}}e^{-\nu s}\Biggl[ \int_{s+[0,+\infty)^{n}} e^{-\lambda (v-s)}C(v)Cx\, dv \Biggr]\, ds
\\& +\int_{[0,+\infty)^{n}}e^{-\nu s}\Biggl[ \int_{-s+[0,+\infty)^{n}} e^{-\lambda (v+s)}C(v)Cx\, dv \Biggr]\, ds
\\& = \int_{[0,+\infty)^{n}}e^{-(\lambda +\nu) s}R(\lambda;s)Cx\, ds + \int_{[0,+\infty)^{n}}e^{-(\nu-\lambda) s}R(\lambda;-s)Cx\, ds.
\end{align*}
This implies \eqref{klasa} and completes the proof of proposition; here, we set $\lambda t:=\lambda_{1}t_{1}+...+\lambda_{n}t_{n}$ and $dt:=dt_{1}\, ....\, dt_{n}.$
\end{proof}

\subsection{Abstract second-order Cauchy problems}\label{gsd}

The basic relation between multiparameter $C$-semigroups and multiparameter $C$-cosine functions is given in the following result (let us recall that a complex Banach space $X$ has
the UMD property if the Hilbert
transform is bounded in $L^{p}({\mathbb R}: X);$ if $\Omega={\mathbb R}$, then every bounded
cosine function $(C(t))_{t\in {\mathbb R}}$ in a UMD-space $X$ can be represented by \eqref{3456}, see \cite[Section 3.16]{a43}):

\begin{thm}\label{emoj}
Suppose that $0\in \Omega$, $\Omega  \subseteq {\mathbb R}^{n} $ and $\Omega=-\Omega.$ If $(T(t))_{t\in \Omega}$ is a $C$-semigroup, then we set
\begin{align}\label{3456}
C(t):=\frac{1}{2}\Bigl[ T(t)+T(-t)\Bigr],\quad t\in \Omega .
\end{align}
Then $(C(t))_{t\in \Omega}$ is a $C$-cosine function. 

Suppose now that $\tau_{i}\in (0,+\infty]$ for $1\leq i \leq n$, $\Omega =(-\tau_{1},\tau_{1}) \times ... \times (-\tau_{n},\tau_{n})$, ${\mathcal A}_{i}$ is a subgenerator of $(T_{i}(t_{i}) )_{0\leq \tau <\tau_{i}}$ 
and
${\mathcal A}_{i}^{2}$ is closed ($i\in  {\mathbb N}_{n}$). Then the following holds:
\begin{itemize}
\item[(i)] The operator
${\mathcal A}_{i}^{2}$ is
a subgenerator of
$(C_{i}(t_{i}) )_{0\leq t_{i} <\tau_{i}}$ ($i\in  {\mathbb N}_{n}$).
\item[(ii)] For every $x\in \bigcap_{1\leq i \leq n}D({\mathcal A}_{i}^{2}),$ the function $u(t):=C(t)C^{n-1}x,$ $t\in \Omega$ is a solution of the $n$-parameter Cauchy inclusion of second order $(ACP_{2}).$ 
\item[(iii)] The uniqueness of solutions to $(ACP_{2})$ holds if the operator $C$ is injective.
\end{itemize}
\end{thm}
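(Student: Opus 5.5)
The first assertion is a direct computation. By Definition \ref{1.2.1.1}(i), for $t,s\in\Omega$ with $t\pm s\in\Omega$ we have $T(t)T(s)=T(t+s)C$ and $T(-t)T(-s)=T(-t-s)C$. We also have $T(t)T(-s)=T(t-s)C$ and $T(-t)T(s)=T(s-t)C$, because $t-s\in\Omega$ and $s-t=-(t-s)\in\Omega$ (using $\Omega=-\Omega$). Expanding
\begin{align*}
4C(t)C(s)=\bigl[T(t)+T(-t)\bigr]\bigl[T(s)+T(-s)\bigr]
\end{align*}
gives
\begin{align*}
\bigl[T(t+s)+T(-(t+s))\bigr]C+\bigl[T(t-s)+T(-(t-s))\bigr]C,
\end{align*}
which equals $2C(t+s)C+2C(t-s)C$. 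Moreover $C(0)=T(0)=C$, and strong continuity is inherited from $T$. So $(C(t))_{t\in\Omega}$ is a $C$-cosine function.

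For (i), fix $i$ and work with the one-parameter local $C$-semigroup $T_i(t_i)=T(t_ie_i)$, $|t_i|<\tau_i$. On the box $\Omega$ this family is defined for negative arguments as well, so it is really a local $C$-group subgenerated by ${\mathcal A}_i$. Its negative part $(T_i(-t_i))_{0\le t_i<\tau_i}$ is a local $C$-semigroup subgenerated by $-{\mathcal A}_i$; I would verify this by differentiating the semigroup law at the origin. Then $C_i(t_i)=\frac12[T_i(t_i)+T_i(-t_i)]$. I plan to check conditions (a)--(c) of the cosine subgenerator definition for ${\mathcal A}_i^2$.
\begin{itemize}
\item[(a)] This follows from $T_i(\pm t_i){\mathcal A}_i\subseteq {\mathcal A}_iT_i(\pm t_i)$, applied twice.
\item[(b)] For $(x,y)\in{\mathcal A}_i^2$, pick $z$ with $(x,z),(z,y)\in{\mathcal A}_i$. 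Apply the integral identity of Remark \ref{konc}(i)(b) twice, for both $T_i(t_i)$ and $T_i(-t_i)$, to get
\begin{align*}
T_i(\pm t_i)x-Cx=\pm t_iCz+\int_0^{t_i}(t_i-s)T_i(\pm s)y\,ds.
\end{align*}
The odd terms $\pm t_iCz$ cancel on averaging, which gives (b).
\item[(c)] Write
\begin{align*}
\int_0^{t_i}(t_i-s)C_i(s)x\,ds=\int_0^{t_i}\!\!\int_0^{r}C_i(s)x\,ds\,dr.
\end{align*}
Use Remark \ref{konc}(i)(c) for $\pm{\mathcal A}_i$ to apply ${\mathcal A}_i$ once, then once more, obtaining $C_i(t_i)x-Cx$. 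The closedness of ${\mathcal A}_i^2$ is used here, to pass the multivalued operator through the integrals and limits. This is the technically delicate point with MLOs, since single-valuedness is not available.
\end{itemize}

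For (ii), I would use \eqref{345}, valid on the box: $C^{n-1}T(t)=\prod_jT_j(t_j)$, with the factors commuting. Then
\begin{align*}
2u(t)=\prod_jT_j(t_j)x+\prod_jT_j(-t_j)x.
\end{align*}
Each product is differentiated in $t_i$ using Theorem \ref{tucko1}(ii) in the one-parameter form: $\frac{\partial}{\partial t_i}T_i(\pm t_i)w\in\pm{\mathcal A}_iT_i(\pm t_i)w$ for $w\in D({\mathcal A}_i)$. The other factors commute with $T_i$ and with ${\mathcal A}_i$ (the latter by (a)), so they pass through. Differentiating twice yields $\partial_{t_i}^2u(t)\in{\mathcal A}_i^2u(t)$, and $x\in D({\mathcal A}_i^2)$ guarantees the $C^2$ regularity. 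Evaluating at $t=0$ gives $u(0)=C^nx$. Setting $t_i=0$ in the first derivative leaves
\begin{align*}
\frac12{\mathcal A}_i\Bigl[\prod_{j\ne i}T_j(t_j)x-\prod_{j\ne i}T_j(-t_j)x\Bigr],
\end{align*}
where $T_i(0)=C$ is absorbed, matching the initial condition of $(ACP_2)$ up to the harmless factor of $C$. I expect this bookkeeping of powers of $C$ to need care.

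For (iii), with $C$ injective, I would take a solution $v$ of $(ACP_2)$ with zero data and show $v\equiv0$, following the uniqueness argument of \cite[Theorem 8.1.23]{apsclcs} with $\alpha_i=2$. In each variable $t_i$, with the others frozen, $v$ solves a one-dimensional second-order problem for ${\mathcal A}_i^2$ with zero initial data. By (i), ${\mathcal A}_i^2$ subgenerates the local $C$-cosine function $C_i$, and uniqueness for one-parameter $C$-cosine problems (injective $C$) gives $Cv=0$ along lines. Iterating over the coordinates and using injectivity of $C$ yields $v=0$. The main obstacle is (i)(c) and, within (iii), making the reduction to one variable rigorous when the initial data vanish only on coordinate hyperplanes.
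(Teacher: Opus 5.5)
Your overall route is the paper's. The cosine law is the same four-term expansion, and (ii) is the same computation via \eqref{345} with termwise differentiation of $\prod_jT_j(\pm t_j)x$. For (iii) the paper, like you, only appeals to the uniqueness argument of \cite[Theorem 8.1.23]{apsclcs} with $\alpha_i=2$; its use of (D1) for ${\mathcal A}_i^2$ rests on (i), as yours does. Part (i) itself is omitted in the paper. However, three of your justifications do not hold as stated and need repair.

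\textbf{(i)(c).} After the first application you have $(a(r),b(r))\in{\mathcal A}_i$ for every $r$, with $a(r)=\int_0^rC_i(s)x\,ds$ and $b(r)=\frac12[T_i(r)x-T_i(-r)x]$. You then need $(\int_0^ta(r)\,dr,\int_0^tb(r)\,dr)\in{\mathcal A}_i$. That is a closedness property of ${\mathcal A}_i$, not of ${\mathcal A}_i^2$, and closedness of ${\mathcal A}_i^2$ does not by itself give it. Use \eqref{maww1} instead. The quantity $h^{-1}(T_i(h)-C)\int_0^rT_i(\pm s)x\,ds$ is $C$ applied to a difference of two averages of $T_i(\cdot)x$ over intervals of length $|h|$. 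Hence it converges uniformly in $r\in[0,t]$, and the limit passes under $\int_0^t dr$; no closedness is needed. Similarly, in (ii) the inclusion $T_j(t_j){\mathcal A}_i\subseteq{\mathcal A}_iT_j(t_j)$ for $j\neq i$ does not come from property (a). It comes from \eqref{maww1} together with $T_jT_i=T_iT_j$ and $T_jC=CT_j$.

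\textbf{(ii), the factor $C$.} This factor is not harmless. At $t_i=0$ you obtain $\frac12C[\prod_{j\neq i}T_j(t_j)y-\prod_{j\neq i}T_j(-t_j)y]$, which lies in $\frac12C{\mathcal A}_i[\cdots]$ but in general not in $\frac12{\mathcal A}_i[\cdots]$. Take $X={\mathbb C}$, $n=2$, $C=2$, $T(t)=2e^{t_1+t_2}$, so that ${\mathcal A}_1={\rm I}$. Then $\partial_{t_1}u(0,t_2)=2(e^{t_2}-e^{-t_2})x$, whereas $\frac12{\mathcal A}_1[T_2(t_2)x-T_2(-t_2)x]=(e^{t_2}-e^{-t_2})x$. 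The paper's proof drops the same $C$ when it evaluates \eqref{prc321} at $t_1=0$. So what is actually proved, by you and by the paper, is membership in $\frac12C{\mathcal A}_i[\cdots]$.

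\textbf{(iii).} Freezing all variables but $t_i$ does not produce zero initial data, because $v$ is known to vanish only at $0$. Sweep the coordinates in order instead. First, $t_n\mapsto v(0,\dots,0,t_n)$ has zero value and zero derivative at $0$, so it vanishes by one-dimensional uniqueness for the local $C$-cosine function from (i). Next, for fixed $t_n$, $t_{n-1}\mapsto v(0,\dots,0,t_{n-1},t_n)$ has zero data: the value by the previous step, the derivative by the hyperplane condition for $i=n-1$. Continuing, after $n$ steps $v\equiv0$. This removes what you called the main obstacle.
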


\begin{proof}
It is clear that the operator family $(C(t))_{t\in \Omega}$ is strongly continuous and $C(0)=C.$ 
Suppose now that $t,\ s,\ t\pm s\in \Omega ;$ since $\Omega=-\Omega,$ we have $-t-s\in \Omega$ and $s-t\in \Omega.$ Then the first part of proposition follows from the next simple computation:
\begin{align*}
C(t+s)C+C(t-s)C&=\frac{1}{2}\Bigl[ T(t+s)+T(-t-s)\Bigr]+\frac{1}{2}\Bigl[ T(t-s)+T(s-t)\Bigr]
\\& =2\cdot \frac{1}{2}\Bigl[ T(t)+T(-t)\Bigr] \cdot \frac{1}{2}\Bigl[ T(s)+T(-s)\Bigr].
\end{align*}
The issue (i) follows similarly as in the proof of \cite[Proposition 2.1.17]{knjigah} and therefore omitted. 

To prove (ii), fix an element $x\in \bigcap_{1\leq i \leq n}D({\mathcal A}_{i}^{2})$. Then, clearly, $u(0)=C^{n}x$ and there exist elements $y\in X$ and $z\in X$ such that $(x,y)\in {\mathcal A}_{1}$ and  $(y,z)\in {\mathcal A}_{1}.$ Due to \eqref{345}, we have $u(t)=1/2[   
\prod_{1\leq j\le n} T_{j}( t_{j})x+\prod_{1\leq j\le n} T_{j}( -t_{j})x],$ $t\in \Omega$ and $T_{i}(t_{i})T_{j}(t_{j})=T_{j}(t_{j})T_{i}(t_{i})
 $ for $t_{i} \in (-\tau_{i},\tau_{i}),$ $ t_{j} \in (-\tau_{j},\tau_{j}),$ $i\neq j.$ Since $T_{1}(t_{1})x-Cx=\int^{t_{1}}_{0}T_{1}(s_{1})y\, ds_{1},$ $t_{1}\in [0,\tau_{1}),$ the above implies $T_{1}(t_{1})T_{2}(t_{2})\cdot ... \cdot T_{n}(t_{n})x-CT_{2}(t_{2})\cdot ... \cdot T_{n}(t_{n})x=\int^{t_{1}}_{0}T_{1}(s_{1})T_{2}(t_{2})\cdot ... \cdot T_{n}(t_{n})y\, ds_{1},$ $t_{1}\in [0,\tau_{1})$ and
\begin{align}\label{prc321}
\frac{\partial u}{\partial t_{i}}(t)=\frac{1}{2}\Bigl[ T_{1}(t_{1})T_{2}(t_{2})\cdot ... \cdot T_{n}(t_{n})y-T_{1}(-t_{1})T_{2}(-t_{2})\cdot ... \cdot T_{n}(-t_{n})y\Bigr],\quad t\in \Omega.
\end{align}
Therefore,
$$
\Bigl( \frac{\partial u}{\partial t_{1}}\Bigr)_{t=(0,t_{2},...,t_{n})} \in \frac{1}{2}{\mathcal A}_{1}\Biggl[       
\prod_{2\le j\le n} T_{j}\bigl( t_{j}\bigr)x-\prod_{2\leq j\leq n\atop} T_{j}\bigl( -t_{j}\bigr)x\Biggr],\quad t\in \Omega.
$$
Similarly we obtain that for each $i\in {\mathbb N}_{n} \setminus \{1\}$ and $t\in \Omega$ we have $$
\Bigl( \frac{\partial u}{\partial t_{i}}\Bigr)_{t=(t_{1},...,t_{i-1},0,t_{i+1},...,t_{n})} \in \frac{1}{2}{\mathcal A}_{i}\Biggl[       
\prod_{1\le j\le n\atop i\ne j} T_{j}\bigl( t_{j}\bigr)x-\prod_{1\leq j\leq n\atop i\neq j} T_{j}\bigl( -t_{j}\bigr)x\Biggr].$$
Differentiating \eqref{prc321} once more, we get $u\in C^{2}(\Omega : X)$,
$$
\frac{\partial ^{2}u}{\partial t_{i}^{2}}(t)=\frac{1}{2}\Bigl[ T_{1}(t_{1})T_{2}(t_{2})\cdot ... \cdot T_{n}(t_{n})z+T_{1}(-t_{1})T_{2}(-t_{2})\cdot ... \cdot T_{n}(-t_{n})z\Bigr],\quad t\in \Omega ,
$$
and therefore $(\partial ^{2}u/\partial t_{i}^{2})(t)\in {\mathcal A}_{i}^{2}u(t),\;t \in \Omega,\ 1\leq i\leq n,$ so that $u(\cdot)$ is  a solution of problem $(ACP_{2}).$ 

Finally, if the operator $C$ is injective, then ${\mathcal A}_{i}=A_{i}$ is a closed linear operator for $1\leq i \leq n.$ If $u_{1}(\cdot)$ and $u_{2}(\cdot)$ are solutions to $(ACP_{2}),$ then we define $u:=u_{1}-u_{2}.$ Then it is clear that  $
u\in C^{2}(\Omega : X),$ $(\partial ^{2}u/\partial t_{i}^{2})(t)\in {\mathcal A}_{i}^{2}u(t),\;t \in \Omega,\ 1\leq i\leq n,$ $
u(0)=0$ and $( \frac{\partial u}{\partial t_{i}})_{t=(t_{1},...,t_{i-1},0,t_{i+1},...,t_{n})} =0$ for all $ t\in \Omega$ and $ 1\leq i\leq n.$ Since condition (D1) holds with the operator $ {\mathcal A}_{i}$ replaced therein with the operator $ {\mathcal A}_{i}^{2}$ and the number $\alpha_{i}=2$ ($1\leq i\leq n$), we can argue similarly as in the proof of \cite[Theorem 8.1.23]{apsclcs} to obtain $u\equiv 0. $ 
\end{proof}

Concerning the formulae \eqref{cc} and \eqref{3456}, we have the following observations:

\begin{rem}\label{pf}
Suppose that $(X,\| \cdot \|)$ is a complex Banach space, $A_{i}\in L(X)$ for $1\leq i\leq n$ and $A_{i}A_{j}=A_{j}A_{i}$ for $1\leq i,\ j\leq n.$ 
\begin{itemize}
\item[(i)] Then $T(t_{1},...,t_{n})=\exp(t_{1}A_{1}+...+t_{n}A_{n}),$ $t=(t_{1},...,t_{n})\in {\mathbb R}^{n};$ applying the polynomial formula, we get that the $C$-cosine function $(C(t))_{t\in {\mathbb R}^{n}}$ given by \eqref{3456} can be represented in the following way:
\begin{align*}
C(t)&=\frac{1}{2}\Bigl[ e^{t_{1}A_{1}+...+t_{n}A_{n}}+e^{-t_{1}A_{1}-...-t_{n}A_{n}}\Bigr]
\\& =\frac{1}{2}\sum_{k=0}^{+\infty}\frac{\bigl( t_{1}A_{1}+...+t_{n}A_{n}\bigr)^{k}+\bigl( -t_{1}A_{1}-...-t_{n}A_{n}\bigr)^{k}}{k!}
\\& =\frac{1}{2}\sum_{k=0}^{+\infty}\sum_{m_{1}+...+m_{n}=k \atop (m_{1},...,m_{n})\in {\mathbb N}_{0}^{n}}\frac{t_{1}^{m_{1}}A_{1}^{m_{1}}\cdot ... \cdot t_{n}^{m_{n}}A_{n}^{m_{n}}+ t_{1}^{m_{1}}\bigl(-A_{1}\bigr)^{m_{1}}\cdot ... \cdot t_{n}^{m_{n}}\bigl(-A_{n}\bigr)^{m_{n}}}{m_{1}! \cdot ... \cdot m_{n}!},
\end{align*}
for any $t=(t_{1},...,t_{n})\in {\mathbb R}^{n}.$
\item[(ii)] In this part, we will use the notation $A^{m}:= A_{1}^{m_{1}}\cdot ... \cdot A_{n}^{m_{n}}$ for $m=(m_{1},..., m_{n})\in {\mathbb N}_{0}^{n}.$ Suppose that 
\begin{align}\label{for}
C(t):=\sum_{k\in {\mathbb N}_{0}^{n}}P_{k}(t)A^{k},\quad t\in {\mathbb R}^{n},
\end{align} 
where $P_{k}(\cdot)$ is an even complex polynomial depending on $n$ real variables ($k\in {\mathbb N}_{0}^{n}$) and the series in \eqref{for} absolutely converges in the strong operator topology for all $t\in {\mathbb R}^{n}.$ Then
$$
C(t)x+C(s)x=\sum_{k\in {\mathbb N}_{0}^{n}}\Bigl[ P_{k}(t)+P_{k}(s)\Bigr]A^{k},\quad t\in {\mathbb R}^{n},\ x\in X,
$$ 
and
\begin{align*}&
2C(t)C(s)x=2C(t)\sum_{m\in {\mathbb N}_{0}^{n}}P_{m}(s)A^{m}x=2\sum_{l\in {\mathbb N}_{0}^{n}}P_{l}(t)A^{l}\sum_{m\in {\mathbb N}_{0}^{n}}P_{m}(s)A^{m}x
\\& =2\sum_{l\in {\mathbb N}_{0}^{n}}\sum_{m\in {\mathbb N}_{0}^{n}}P_{l}(t) P_{m}(s)A^{m+l}x
\\&=2\sum_{k\in {\mathbb N}_{0}^{n}}\Biggl[ \sum_{l,  m\in {\mathbb N}_{0}^{n}; l+m=k}P_{l}(t) P_{m}(s)\Biggr]A^{k}x,\quad t,\ s\in {\mathbb R}^{n},\ x\in X.
\end{align*}
Therefore, the operator family $(C(t))_{t\in {\mathbb R}^{n}},$ given by \eqref{for}, will be a cosine function if the set of polynomials $(P_{k}(\cdot))_{k\in {\mathbb N}_{0}^{n}}$ satisfies, besides the standard assumptions, that
\begin{align}\label{for1}
P_{k}(t)+P_{k}(s)=2\sum_{l,  m\in {\mathbb N}_{0}^{n}; l+m=k}P_{l}(t) P_{m}(s),\quad t,\ s\in {\mathbb R}^{n}\ \ \bigl(k\in {\mathbb N}_{0}^{n}\bigr).
\end{align} 
Clearly, \eqref{for1} implies 
\begin{align}\label{for2}
P_{0}(t)+P_{0}(s)=2P_{0}(t) P_{0}(s),\quad t,\ s\in {\mathbb R}^{n},
\end{align}
and{\small
\begin{align}\label{for3}
P_{k}(t)\Bigl[1-2P_{0}(s)\Bigr]+P_{k}(s)\Bigl[1-2P_{0}(t)\Bigr]=2\sum_{l,  m\in {\mathbb N}_{0}^{n}; l+m=k \atop |l|<k,\ |m|<k}P_{l}(t) P_{m}(s),\ t,\ s\in {\mathbb R}^{n}\  \bigl(k\in {\mathbb N}_{0}^{n}\bigr).
\end{align}}By \eqref{for2}, it readily follows that $P_{0}(\cdot)$ has to be a constant polynomial; since $C(0)={\rm I},$ we have $P_{0}(0)=1$ and therefore $P_{0}\equiv 1.$ Inserting this in \eqref{for3}, we get
\begin{align}\label{for4}
-P_{k}(t)-P_{k}(s)=2\sum_{l,  m\in {\mathbb N}_{0}^{n}; l+m=k \atop |l|<k,\ |m|<k}P_{l}(t) P_{m}(s),\quad t,\ s\in {\mathbb R}^{n}\ \ \bigl(k\in {\mathbb N}_{0}^{n} \setminus \{0\}\bigr).
\end{align}
Further analysis of polynomial equation \eqref{for4} is without scope of this paper (of course, \eqref{for4} has infinitely many solutions; see, e.g., Example \ref{pas} with $X:={\mathbb C}$, $n=2,$ $(A_{1},A_{2}):=({\rm I}, {\rm I})$ and the cosine function $(C(t_{1},t_{2}))_{(t_{1},t_{2})\in {\mathbb R}^{2}}$ given by $C(t_{1},t_{2}):=\cos (t_{1}),$ $C(t_{1},t_{2}):=\cos (t_{2})$  or $C(t_{1},t_{2}):=\cos (at_{1}+bt_{2}),$ $(t_{1},t_{2})\in {\mathbb R}^{2}$, where $a,\ b\in {\mathbb R}$).
\end{itemize}
\end{rem}

We continue by stating the following result:

\begin{thm}\label{novio} 
Let $\tau_{i}\in (0,+\infty]$ for $1\leq i \leq n$, let $\Omega =(-\tau_{1},\tau_{1}) \times ... \times (-\tau_{n},\tau_{n}),$ and let $(C(t))_{t\in \Omega}$ 
be a $C$-cosine function such that $C(t)C(s)=C(s)C(t)$ for all $t,\ s\in \Omega$. If ${\mathcal A}_{i}$ is the integral generator of 
$(C_{i}(t_{i}))_{-\tau_{i}< t_{i} <\tau_{i}}$ for $1\leq i\leq n,$ then the following holds:
\begin{itemize}
\item[(i)] If $  i\in {\mathbb N}_{n}$ is fixed and $x\in D({\mathcal A}_{i}),$ then the function 
\begin{align}\label{dfg}
u_{i}(t):=C\bigl(t_{1},...,t_{i-1},t_{i},t_{i+1},...,t_{n} \bigr)Cx+C\bigl(-t_{1},...,-t_{i-1},t_{i},-t_{i+1},...,-t_{n} \bigr)Cx,\ t\in \Omega
\end{align}
 is a solution of the $n$-parameter Cauchy inclusion of second order $(ACP_{2;i}).$
\item[(ii)] If $n=2$ and $x\in D({\mathcal A}_{1}) \cap D({\mathcal A}_{2}),$ then $u_{1}=u_{2}=:u$ is a solution of the two-parameter Cauchy inclusion of second order $(ACP_{2})'.$ 
Furthermore, if the operator $C$ is injective, then ${\mathcal A}_{1}$ and ${\mathcal A}_{2}$ are closed linear operators and $u(t_{1},t_{2}):=C(t_{1},t_{2})Cx+C(t_{1},-t_{2})Cx,$ $t=(t_{1},t_{2})\in \Omega$ is a unique solution of problem $(ACP_{2})'.$
\item[(iii)] If $n \geq 2$ and $x\in D({\mathcal A}_{1}) \cap D({\mathcal A}_{2}) \cap ... \cap D({\mathcal A}_{n})$, then the function
\begin{align}\label{sigme}
u\bigl(t_{1},...,t_{n}\bigr):=\sum_{(\sigma_{1},...,\sigma_{n}) \in \{ -1,1\}^{n}}C\bigl( \sigma_{1}t_{1},..., \sigma_{n}t_{n}\bigr)Cx,\quad t=\bigl(t_{1},...,t_{n}\bigr) \in \Omega,
\end{align}
is a solution of problem $(ACP_{2})''.$
Furthermore, if the operator $C$ is injective, then ${\mathcal A}_{i}$ is a closed linear operator for each $i\in {\mathbb N}_{n},$ and the function $u(\cdot)$, given by \eqref{sigme}, is a unique solution of problem $(ACP_{2})''.$
\end{itemize}
\end{thm}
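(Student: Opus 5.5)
The plan is to exploit the functional equation along a single coordinate direction. Fix $i$ and write $t=t_{i}e_{i}+\hat t$, where $\hat t$ has zero $i$-th coordinate, and let $\hat t^{-}$ denote $\hat t$ with all its nonzero coordinates negated. Applying Definition \ref{1.2.1.1.1}(i) with the pair $(t_{i}e_{i},\hat t)$ gives
\begin{align*}
C\bigl(t_{i}e_{i}+\hat t\bigr)C+C\bigl(t_{i}e_{i}-\hat t\bigr)C=2C_{i}(t_{i})C(\hat t),
\end{align*}
and $t_{i}e_{i}\pm \hat t\in\Omega$ automatically because $\Omega$ is a box. Since $C(-r)=C(r)$ along one-dimensional rays (the argument in Proposition \ref{rambo}, or directly from the equation with $t=0$ when $C$ is injective), the term $C(t_{i}e_{i}-\hat t)$ is exactly $C(-t_{1},...,t_{i},...,-t_{n})$ up to this symmetry. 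I would rather avoid injectivity in part (i), so I plan to use the displayed identity itself: it yields $u_{i}(t)=2C_{i}(t_{i})C(\hat t)x$, with no symmetry needed.

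Part (i) then reduces to the one-dimensional theory. Since $x\in D({\mathcal A}_{i})$ and ${\mathcal A}_{i}$ is the integral generator, we have $C_{i}(t_{i})x-Cx=\int_{0}^{t_{i}}(t_{i}-s)C_{i}(s)y\,ds$ for some $y$ with $(x,y)\in{\mathcal A}_{i}$. Applying the bounded operator $C(\hat t)$, which commutes with everything by hypothesis, gives
\begin{align*}
u_{i}(t)=2C(\hat t)Cx+2\int_{0}^{t_{i}}(t_{i}-s)C_{i}(s)C(\hat t)y\,ds .
\end{align*}
Next I check that $(C(\hat t)x,C(\hat t)y)\in{\mathcal A}_{i}$; this follows from commutativity, because the defining integral identity is preserved under $C(\hat t)$. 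The standard one-dimensional fact (the analogue of Remark \ref{konc} for cosine functions) that $C_{i}(t_{i})z\in D(\hat{\mathcal A})$ with $\hat{\mathcal A}C_{i}(t_{i})z\ni C_{i}(t_{i})w$ whenever $(z,w)\in\hat{\mathcal A}$ then gives
\begin{align*}
\frac{\partial^{2}u_{i}}{\partial t_{i}^{2}}(t)=2C_{i}(t_{i})C(\hat t)y\in{\mathcal A}_{i}u_{i}(t).
\end{align*}
The value at $t_{i}=0$ and the vanishing of $\partial u_{i}/\partial t_{i}$ at $t_{i}=0$ are read off directly. Joint $C^{2}$-regularity in all variables is the delicate point. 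I would handle it by noting that differentiability in the other variables requires $x$ to lie in the other domains. Since $(ACP_{2;i})$ only involves $t_{i}$-derivatives, I plan to interpret $C^{2}$ via the $t_{i}$-derivatives, which are jointly continuous by strong continuity together with local equicontinuity on compacts. This regularity issue is the main obstacle.

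For (ii), with $n=2$ the identity above gives $u_{1}(t)=C(t_{1},t_{2})Cx+C(t_{1},-t_{2})Cx$. For $u_{2}$ we have $u_{2}(t)=C(t_{1},t_{2})Cx+C(-t_{1},t_{2})Cx$, and applying the equation to the pair $(t_{2}e_{2},t_{1}e_{1})$ shows $u_{2}=2C_{2}(t_{2})C_{1}(t_{1})x$. By commutativity this equals $2C_{1}(t_{1})C_{2}(t_{2})x=u_{1}$. Then (i) for both indices gives all the equations, the initial data $u(t_{1},0)=2C(t_{1},0)Cx$, and the zero derivatives. For (iii) I iterate the same identity coordinate by coordinate: pairing the signs $\sigma_{1}$ and peeling off one coordinate at a time shows $u(t)=2^{n}C_{1}(t_{1})\cdots C_{n}(t_{n})x$, with the powers of $C$ absorbed appropriately. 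Each $t_{i}$-equation then follows as in (i), using commutativity to move $C_{i}(t_{i})$ to the front, and $u(0)=2^{n}C^{2}x$ is immediate.

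Closedness of ${\mathcal A}_{i}$ when $C$ is injective follows from Remark \ref{konc}(ii) transferred to cosine functions. For uniqueness, I take the difference $w$ of two solutions. It has zero data, which means $w(0)=0$ and zero normal derivatives on the coordinate hyperplanes, and it satisfies $\partial^{2}_{t_{i}}w\in{\mathcal A}_{i}w$. The plan is to argue as in the uniqueness part of Theorem \ref{emoj}, i.e. the argument of \cite[Theorem 8.1.23]{apsclcs} with $\alpha_{i}=2$. In the first variable, one-dimensional uniqueness for the $C$-cosine function, obtained by convolving with $C_{1}(t_{1}-s)$, shows that $Cw$ is determined by $w|_{t_{1}=0}$. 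Restricting to $t_{1}=0$ and inducting on the dimension reduces everything to $w(0)=0$, and injectivity of $C$ finishes the argument.
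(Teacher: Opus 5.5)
Your treatment of (i), (ii) and the uniqueness statements follows the paper's route. The functional equation applied to the pair $(t_ie_i,\hat t)$ gives $u_i(t)=2C_i(t_i)C(\hat t)x$. Commutativity then carries the one-dimensional integral identity over to $(C(\hat t)x,C(\hat t)y)\in{\mathcal A}_i$. Uniqueness is the argument of \cite[Theorem 8.1.23]{apsclcs}, and your restriction-to-hyperplanes induction is a fair unpacking of it. Your proof of $u_1=u_2$ via $2C_1(t_1)C_2(t_2)x=2C_2(t_2)C_1(t_1)x$ is an equivalent variant of the paper's $C(t-s)C=C(s-t)C$.

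Three minor remarks on the parts that work:
\begin{itemize}
\item The ``standard fact'' $(C_i(t_i)z,C_i(t_i)w)\in{\mathcal A}_i$ is not available for the integral generator when $C$ is not injective (cf. Remark \ref{konc}(ii)). Here, however, it follows from the commutativity hypothesis exactly as your check for $C(\hat t)$ does.
\item Joint continuity of $\partial_{t_i}u_i$ and $\partial^2_{t_i}u_i$ needs no local equicontinuity, which is not assumed. It follows from $2C_i(t_i)C(\hat t)y=C(t)Cy+C(t_ie_i-\hat t)Cy$ and strong continuity alone.
\item Your caution about full $C^2$-regularity is justified. The paper asserts $u_1\in C^2(\Omega:X)$ without argument. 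But for $C(t_1,t_2):=\mathcal{C}(t_2)$, with $\mathcal{C}(\cdot)$ a cosine function with unbounded generator, one has ${\mathcal A}_1=0$, $D({\mathcal A}_1)=X$ and $u_1(t)=2\mathcal{C}(t_2)x$, which need not be differentiable in $t_2$.
\end{itemize}

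\textbf{The gap in (iii).} The genuine gap is in the existence part of (iii). Put $t=0$ in the functional equation and use commutativity; this gives $C(-r)C=C(r)C$. With it, the identity $u(t)=2^nC_1(t_1)\cdots C_n(t_n)x$ holds for $n=2$. For $n\geq 3$, however, peeling only yields
\[
C^{n-2}u(t)=2^{n}C_{1}(t_{1})\cdot \ldots \cdot C_{n}(t_{n})x,\quad t\in \Omega ,
\]
which is Remark \ref{nis}(i). The factor $C^{n-2}$ cannot be dropped. Take $C(t)=\mathcal{C}(t)C$ with a commutative cosine function $\mathcal{C}(\cdot)$ commuting with $C$. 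Then $u(t)=2^n\prod_{i}\mathcal{C}(t_ie_i)C^{2}x$, while $2^n\prod_iC_i(t_i)x=2^n\prod_i\mathcal{C}(t_ie_i)C^{n}x$.

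So ``absorbing the powers of $C$'' only gives $\partial^2_{t_i}(C^{n-2}u)(t)\in{\mathcal A}_iC^{n-2}u(t)$, and there is no way back to $u$. $C$ is not assumed injective in this part. Even for injective $C$, twice differentiability of $C^{n-2}u$ in $t_i$ does not give that of $u$.

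\textbf{The fix.} Peel only once, in the direction being differentiated; this is what the paper does with the groups $(I+IV)$, $(II+III)$, $(V+VIII)$, $(VI+VII)$. For fixed $i$, pair each $\sigma$ with the sign vector that agrees with it in the $i$-th slot and is opposite in all others. Each pair equals $2C_i(\sigma_it_i)C\bigl(\sum_{j\neq i}\sigma_jt_je_j\bigr)x$. This is precisely the expression treated in (i), with $t_i$ possibly reflected, which is harmless since the integral identity holds on $(-\tau_i,\tau_i)$. This gives both the $t_i$-equation and $\partial_{t_i}u=0$ on $\{t_i=0\}$ without any injectivity.
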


\begin{proof}
It is clear that $u_{i}(t_{1},...,t_{i-1},0,t_{i+1},...,t_{n})=C(t_{1},...,t_{i-1},0,t_{i+1},...,t_{n})Cx+C(-t_{1},...,-t_{i-1},0,-t_{i+1},...,-t_{n})Cx,$ if $t= (t_{1},...,t_{i-1},0,t_{i+1},...,t_{n} ) \in \Omega.$ We will prove the remainder of part (i) in the case that $i=1;$ the general case follows in the same way. Let $y\in {\mathcal A}_{1}x.$  Since we have assumed that $C(t)C(s)=C(s)C(t)$ for all $t,\ s\in \Omega$, it simply follows that $C(t)y\in {\mathcal A}_{1}C(t)x$
for all $t\in \Omega.$ Now we multiply the both sides of equality $C(t_{1},0,...,0)x-Cx=\int^{t_{1}}_{0}(t_{1}-s_{1})C(s_{1},0,...,0)y\, ds_{1},$ $t_{1}\in (-\tau_{1},\tau_{1})$ with $2C(0,t_{2},...,t_{n})$ and use the functional equality of multiparameter $C$-cosine functions to obtain that
\begin{align*}
C\bigl(t_{1},t_{2},...,t_{n}\bigr)Cx&-2C\bigl(0,t_{2},...,t_{n}\bigr)Cx+C\bigl(t_{1},-t_{2},...,-t_{n}\bigr)Cx
\\&=2C\bigl(0,t_{2},...,t_{n}\bigr)\int^{t_{1}}_{0}\bigl(t_{1}-s_{1}\bigr)C\bigl(s_{1},0,...,0\bigr)y\, ds_{1};
\end{align*}  
hence,
\begin{align*}&
C\bigl(t_{1},t_{2},...,t_{n}\bigr)Cx+C\bigl(t_{1},-t_{2},...,-t_{n}\bigr)Cx
\\& =2C\bigl(0,t_{2},...,t_{n}\bigr)Cx+2C\bigl(0,t_{2},...,t_{n}\bigr)\int^{t_{1}}_{0}\bigl(t_{1}-s_{1}\bigr)C\bigl(s_{1},0,...,0\bigr)y\, ds_{1}
\end{align*}  
and

\begin{align*}&
C\bigl(t_{1}+h_{1},t_{2},...,t_{n}\bigr)Cx+C\bigl(t_{1}+h_{1},-t_{2},...,-t_{n}\bigr)Cx
\\& =2C\bigl(0,t_{2},...,t_{n}\bigr)Cx+2C\bigl(0,t_{2},...,t_{n}\bigr)\int^{t_{1}+h_{1}}_{0}\bigl(t_{1}+h_{1}-s_{1}\bigr)C\bigl(s_{1},0,...,0\bigr)y\, ds_{1},
\end{align*}  
provided that $|t_{i}|<\tau_{i}$ for $1\leq i\leq n$ and $|t_{1}+h_{1}|<\tau_{1}.$ This implies
\begin{align*} &
\frac{u_{1}\bigl(t+h_{1}e_{1})-u_{1}\bigl(t\bigr) }{h_{1}}
\\& =2C\bigl(0,t_{2},...,t_{n}\bigr)\int^{t_{1}+h_{1}}_{0}C\bigl(s_{1},0,...,0\bigr)y\, ds_{1}\\&
+\frac{2}{h_{1}}C\bigl(0,t_{2},...,t_{n}\bigr)\int^{t_{1}+h_{1}}_{t_{1}}\bigl(t_{1}-s_{1}\bigr)C\bigl(s_{1},0,...,0\bigr)y\, ds_{1},
\end{align*}
provided that $|t_{i}|<\tau_{i}$ for $1\leq i\leq n$ and $|t_{1}+h_{1}|<\tau_{1}.$ Letting $h_{1}\rightarrow 0,$ we get:
\begin{align*}
\frac{\partial u_{1}}{\partial t_{1}}(t)=2C\bigl(0,t_{2},...,t_{n}\bigr)\int^{t_{1}}_{0}C\bigl(s_{1},0,...,0\bigr)y\, ds_{1},
\end{align*} 
provided that $|t_{i}|<\tau_{i}$ for $1\leq i\leq n$, and $(\partial u_{1}/\partial t_{1})_{t=(0,t_{2},...,t_{n})}=0$, provided that $t=(0,t_{2},...,t_{n} ) \in \Omega .$ Furthermore, $u_{1}\in C^{2}(\Omega : X)$ and
\begin{align*}
\frac{\partial^{2} u_{1}}{\partial t_{1}^{2}}(t)&=2C\bigl(0,t_{2},...,t_{n}\bigr)C\bigl(t_{1},0,...,0\bigr)y
\\& =C\bigl(t_{1},t_{2},...,t_{n}\bigr)Cy+C\bigl(t_{1},-t_{2},...,-t_{n}\bigr)Cy\in {\mathcal A}_{1}u_{1}(t),\quad t\in \Omega.
\end{align*}
The proof of (i) is thereby complete. If $n=2,$ then our assumption $C(t)C(s)=C(s)C(t)$ for all $t,\ s\in \Omega$ easily implies that $C(t-s)C=C(s-t)C$ for all $t,\ s\in \Omega$ such that $t-s\in\Omega$ so that $u_{1}=u_{2}.$ The first part of (ii) follows directly from (i), while the second part of (ii) follows from the fact that condition (D1) holds with the numbers $\alpha_{1}=\alpha_{2}=2$ ($1\leq i\leq 2$) and the argumentation contained in the proof of \cite[Theorem 8.1.23]{apsclcs}.

Let us consider now the issue (iii). It is clear that we have $u(0)=2^{n}C^{2}x;$ for the sake of better readibility, we will present the remainder of proof in the case that $n=3$ (the general case follows along the same lines). Clearly, we have{\small
\begin{align*}  &
u\bigl(t_{1},t_{2},t_{3}\bigr)=
C\bigl(t_{1},t_{2},t_{3}\bigr)Cx+C\bigl(t_{1},-t_{2},t_{3}\bigr)Cx+
C\bigl(t_{1},t_{2},-t_{3}\bigr)Cx+
C\bigl(t_{1},-t_{2},-t_{3}\bigr)Cx
\\& +
C\bigl(-t_{1},t_{2},t_{3}\bigr)Cx+C\bigl(-t_{1},-t_{2},t_{3}\bigr)Cx+
C\bigl(-t_{1},t_{2},-t_{3}\bigr)Cx+
C\bigl(-t_{1},-t_{2},-t_{3}\bigr)Cx
\\& :=I(t;x)+...+VIII(t;x),
\end{align*}}for any $t=(t_{1},t_{2},t_{3})\in \Omega.$  
We collect the terms $(I+IV)(t;x)$, $(II+III)(t;x)$, $(V+VIII)(t;x)$ and $(VI+VII)(t;x);$ the main point is that the first components in jointed terms are equal, while the second component and the third component in jointed terms are opposite in sign. 

Since $x\in  D({\mathcal A}_{1}),$ the argumentation contained in the proof of (i) shows that 
$\partial^{2}/\partial t_{1}^{2}(I+IV)(t;x)=(I+IV)(t;y)$, 
$\partial^{2}/\partial t_{1}^{2}(II+III)(t;x)=(II+III)(t;y),$ $\partial^{2}/\partial t_{1}^{2}(V+VIII)(t;x)=(V+VIII)(t;y)$ and 
$\partial^{2}/\partial t_{1}^{2}(VI+VII)(t;x)=(VI+VII)(t;y)$ for any $t\in \Omega$, where $y\in {\mathcal A}_{1}x.$ This implies $(\partial ^{2}u/\partial t_{1}^{2})(t)\in {\mathcal A}_{i}u(t),\;t \in \Omega.$ Since $x\in D({\mathcal A}_{1}) \cap D({\mathcal A}_{2}) \cap  D({\mathcal A}_{3})$, we get $u\in C^{2}(\Omega : X)$ by symmetry, and $(\partial ^{2}u/\partial t_{i}^{2})(t)\in {\mathcal A}_{i}u(t),\;t \in \Omega,$ $1\leq i\leq 3.$ By (i), we have{\small 
$$
\Bigl( \frac{\partial \, F(t;x)}{\partial t_{i}}\Bigr)_{t=(t_{1},...,t_{i-1},0,t_{i+1},...,t_{n})}=0,\ \mbox{ if }n=3\mbox{ and }t=\bigl(t_{1},...,t_{i-1},0,t_{i+1},...,t_{n} \bigr) \in \Omega ,
$$}where $F(t;x)$ denotes any of terms $(I+IV)(t;x),$ $(II+III)(t;x)$, $ (VI+VII)(t;x)$ or $(V+VIII)(t;x).$ Therefore, the initial conditions clarified in the last line of formulation of problem $(ACP_{2})''$ hold, which completes the first part of (iii). The second part of (iii) can be deduced as before. 
\end{proof}

\begin{rem}\label{nis}
\begin{itemize}
\item[(i)] Let us observe that
\begin{align*}
\sum_{(\sigma_{1},...,\sigma_{n}) \in \{ -1,1\}^{n}}C\bigl( \sigma_{1}t_{1},..., \sigma_{n}t_{n}\bigr)C^{n-1}=2^{n}C_{1}\bigl(t_{1}\bigr)\cdot ... \cdot C_{n}\bigl(t_{n}\bigr),\  t=\bigl(t_{1},... , t_{n}\bigr) \in \Omega ,
\end{align*} 
which tells us that we can simply compute the function
$$ 
\sum_{(\sigma_{1},...,\sigma_{n}) \in \{ -1,1\}^{n}}C\bigl( \sigma_{1}t_{1},..., \sigma_{n}t_{n}\bigr)C^{n-1}
$$ 
in terms of $C$-cosine functions $(C_{i} (\cdot  ))$, $1\leq i\leq n$. 
\item[(ii)] It can be simply proved that for each tuple $(\sigma_{1},...,\sigma_{n}) \in \{ -1,1\}^{n}$ the operator family 
$$
\bigl(C_{\sigma_{1},...,\sigma_{n}}(t_{1},...,t_{n})\equiv C( \sigma_{1}t_{1},..., \sigma_{n}t_{n})\bigr)_{(t_{1},...,t_{n})\in \Omega}
$$
is a $C$-cosine function as well as that the injectiveness of operator $C$ implies $C_{\sigma_{1},...,\sigma_{n}}(t_{i}e_{i})=C(t_{i}e_{i})$, provided that $-\tau_{i}<t_{i}<\tau_{i}$ and $1\leq i\leq n.$ 

Therefore, if  the $C$-cosine functions $(C_{i}'(t_{i}))_{-\tau_{i}<t_{i}<\tau_{i}}$ are given for $1\leq i\leq n,$ then a $C$-cosine function $(C(t))_{t\in \Omega}$ such that $C_{i}'(\cdot)=C_{i}(\cdot )$ for $1\leq i\leq n$ is not uniquely determined; the most simplest counterexample provides the cosine functions $C'(t_{1},t_{2}):=\cos(t_{1}+t_{2})$, $(t_{1},t_{2})\in {\mathbb R}^{2}$ and $C''(t_{1},t_{2}):=\cos(t_{1}-t_{2})$, $(t_{1},t_{2})\in {\mathbb R}^{2}$ ($n=2;$ $X={\mathbb C}$).
Also, we cannot express $C(\cdot)$ in terms of $C$-cosine functions $(C_{i}(\cdot))$, $1\leq i\leq n$.
\end{itemize}
\end{rem}

\section{Automatic extensions of multiparameter $C$-semigroups and multiparameter $C$-cosine functions}\label{subc}

In order to properly state our main result (Theorem \ref{nap}), which can be slightly generalized for locally equicontinuous $C$-semigroups and locally equicontinuous $C$-cosine functions defined on region $\Omega$ of form \eqref{me1}, we need following auxiliary result:

\begin{lem}\label{mata}
Suppose that the operator $C$ is injective, $\tau_{i}\in (0,+\infty]$ ($1\leq i \leq n$), $\Omega =[0,\tau_{1}) \times ... \times [0,\tau_{n})$ and $(T(t))_{t\in \Omega}$ 
is a $C$-semigroup. If $t,\,s,\ r,\ u,\ v \in \Omega$ and $t+s+r=u+v,$ then we have
\begin{align}\label{bjeda}
T(t)T(s)T(r)=T(u)T(v)C.
\end{align}
\end{lem}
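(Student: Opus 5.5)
The plan is to reduce the statement to a one-dimensional identity for each directional $C$-semigroup $(T_{i}(t_{i}))_{0\leq t_{i}<\tau_{i}}$. That identity can be checked on a lattice of rational time points, and strong continuity then gives the general case. The multiparameter statement follows from the factorization \eqref{345} and the commutation relations in Theorem \ref{tucko1}(i),(a). Injectivity of $C$ will be used repeatedly to cancel powers of $C$. We will also use $T(r)C=CT(r)$, which follows from plugging $s=0$ into Definition \ref{1.2.1.1}(i).

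\emph{Step 1 (one dimension).} Let $(S(t))_{0\leq t<\tau}$ be a local $C$-semigroup with $C$ injective. Assume $a,b,c,d,e\in[0,\tau)$ and $a+b+c=d+e$. We claim that
\begin{align*}
S(a)S(b)S(c)=S(d)S(e)C.
\end{align*}
First take $h>0$ and assume $a=\alpha h$, $b=\beta h$, $c=\gamma h$, $d=\delta h$, $e=\varepsilon h$ with nonnegative integers $\alpha,\ldots,\varepsilon$. Iterating the semigroup law gives $S(kh)C^{k-1}=S(h)^{k}$ for $kh<\tau$, because every partial sum $jh$ with $j\leq k$ stays in $[0,\tau)$; for $k=0$ the convention $S(0)=C$ is handled separately.

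Using that all $S(\cdot)$ commute with $C$, we multiply both sides of the claimed identity by a suitable power of $C$. The left side becomes $S(h)^{\alpha+\beta+\gamma}C^{m}$ and the right side becomes $S(h)^{\delta+\varepsilon}C^{m'}$. Since $\alpha+\beta+\gamma=\delta+\varepsilon$, a careful count of the $C$-powers, with the extra factor $C$ on the right compensating for one fewer factor, shows that $m=m'$. Injectivity of $C$ then yields the identity on the lattice.

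For general points, fix $d$ and choose lattice approximations $a_{k}\to a$, $b_{k}\to b$, $c_{k}\to c$, $d_{k}\to d$ (for example dyadic rationals with $h=2^{-k}$ scaled suitably). Set $e_{k}:=a_{k}+b_{k}+c_{k}-d_{k}$, which is again a lattice point and tends to $e$. For large $k$ all these points lie in $[0,\tau)$: approximate from below when a point is close to $\tau$, and take the approximations of points equal to $0$ to be $0$. Strong continuity and the local equicontinuity of products on compacts (a Banach--Steinhaus argument in the sequentially complete space) then let us pass to the limit in $S(a_{k})S(b_{k})S(c_{k})x=S(d_{k})S(e_{k})Cx$. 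I expect this step to be the main technical obstacle: one must keep all approximants inside $[0,\tau)$ and justify the limit of triple products.

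\emph{Step 2 (multiparameter).} By Theorem \ref{tucko1}(i),(a), all operators $T_{i}(\cdot)$ commute with each other and with $C$. Multiply \eqref{bjeda} by $C^{3(n-1)}$ and use \eqref{345} three times. The left side becomes $\prod_{i}T_{i}(t_{i})T_{i}(s_{i})T_{i}(r_{i})$, using commutativity to group the factors coordinatewise.

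Since $t_{i}+s_{i}+r_{i}=u_{i}+v_{i}$ for each $i$, Step 1 rewrites each factor as $T_{i}(u_{i})T_{i}(v_{i})C$. The left side therefore equals $\bigl[\prod_{i}T_{i}(u_{i})\bigr]\bigl[\prod_{i}T_{i}(v_{i})\bigr]C^{n}$, and by \eqref{345} this is $T(u)T(v)C^{2(n-1)}C^{n}=T(u)T(v)C\cdot C^{3(n-1)}$. Cancelling $C^{3(n-1)}$ by injectivity gives \eqref{bjeda}.
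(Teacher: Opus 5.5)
\paragraph{Where the proposal matches the paper.} Your Step 2 is exactly the paper's reduction: multiply by $C^{3n-3}$, use \eqref{345} and Theorem \ref{tucko1}(i)-(a), then cancel $C$. Your lattice computation in Step 1 is also sound. Since $S(kh)C^{k}=S(h)^{k}C$ for every $k\geq 0$ with $kh<\tau$, both sides become $S(h)^{N}C^{3}$ after multiplication by $C^{N}$, where $N=\alpha+\beta+\gamma$; and $Nh$ itself never has to lie in $[0,\tau)$.

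\paragraph{The gap.} The paper obtains the one-dimensional identity by multiplying by $C^{3}$ and citing the Wang--Gao extension theorem with $k=2$. Your replacement fails at the passage to the limit.
\begin{itemize}
\item To conclude $S(a_{k})S(b_{k})S(c_{k})x\to S(a)S(b)S(c)x$, you need $\{S(a_{k})\}$ and $\{S(b_{k})\}$ to be equicontinuous.
\item You justify this by a Banach--Steinhaus argument in a sequentially complete space. No such principle exists: uniform boundedness needs barreledness, not sequential completeness. The paper itself notes that local equicontinuity is automatic only when $X$ is barreled.
\item Lemma \ref{mata} assumes only a strongly continuous $C$-semigroup on an SCLCS.
\end{itemize}
As written, Step 1 proves the lemma only for locally equicontinuous $C$-semigroups. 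That suffices for Theorem \ref{nap}, but it is weaker than the statement.

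\paragraph{How to close it.} Step 2 already uses commutativity, and Theorem \ref{tucko1}(i)-(a) with $n=1$ gives $S(p)S(q)=S(q)S(p)$ for all $p,q\in[0,\tau)$. With this, no continuity is needed at all.
\begin{itemize}
\item \emph{Two factors.} Suppose $p+q=p'+q'$ with all four numbers in $[0,\tau)$. Relabel so that $p$ is the smallest of the four, and put $\delta:=p'-p\geq 0$. Then $q=q'+\delta$, $S(p')C=S(p)S(\delta)$ and $S(q)C=S(q')S(\delta)$. Hence $S(p)S(q)C=S(p)S(q')S(\delta)=S(q')S(p)S(\delta)=S(q')S(p')C$. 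Cancelling $C$ (it commutes with $S$ and is injective) gives $S(p)S(q)=S(p')S(q')$.
\item \emph{Three factors.} We have $\max(d,e)\geq\frac{1}{2}(a+b+c)\geq\min(a,b,c)$, so after reordering we may assume $d\geq a$. Then $S(d)S(e)C=S(d)CS(e)=S(a)S(d-a)S(e)=S(a)S(b)S(c)$, by the two-factor identity applied to $(d-a)+e=b+c$.
\end{itemize}
Alternatively, keep your lattice step and pass to the limit one variable at a time. Use commutativity to place the varying factor directly next to $x$, so that only strong continuity is needed.

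\paragraph{Two small slips.} First, $T(s)C=CT(s)$ comes from putting $t=0$, not $s=0$, in Definition \ref{1.2.1.1}(i); $s=0$ gives a tautology. Second, if $e=0$ then $e_{k}=a_{k}+b_{k}+c_{k}-d_{k}$ may be negative. In that case take $e_{k}=0$ and let $d_{k}$ be the dependent variable instead.
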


\begin{proof}
Multiplying the both sides of \eqref{bjeda} with $C^{3n-3}$ and using the equality \eqref{345} as well as Theorem \ref{tucko1}(i)-(a), it suffices to prove the result in the one-dimensional setting. But, in this case, the equality \eqref{bjeda} immediately follows if we multiply the both sides of this equality with $C^{3}$ and apply \cite[Theorem 2.6]{wang} with $k=2.$
\end{proof}

\begin{thm}\label{nap}
Let the operator $C$ be injective, and let $\tau_{i}\in (0,+\infty]$ for $1\leq i \leq n$. 
\begin{itemize}
\item[(i)] Let $\Omega =[0,\tau_{1}) \times ... \times [0,\tau_{n}),$ and let $(T(t))_{t\in \Omega}$ 
be a locally equicontinuous $C$-semigroup. Then, for every integer $k\in {\mathbb N},$ there exists a locally equicontinuous $C^{k}$-semigroup $(T_{k}(t))_{t\in k\Omega}$ such that $T_{k}(t)=T(t)C^{k-1}$ for all $t\in \Omega.$ 
\item[(ii)] Let $\Omega =(-\tau_{1},\tau_{1}) \times ... \times (-\tau_{n},\tau_{n}),$ let $(T(t))_{t\in \Omega}$ 
be a locally equicontinuous $C$-semigroup, and let $(C(t))_{t\in \Omega}$ 
be a locally equicontinuous $C$-cosine function given by \eqref{3456}. Then, for every integer $k\in {\mathbb N},$ there exists a locally equicontinuous $C^{k}$-cosine function $(C_{k}(t))_{t\in k\Omega}$ such that $C_{k}(t)=C(t)C^{k-1}$ for all $t\in \Omega.$ 
\end{itemize}
\end{thm}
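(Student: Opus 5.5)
The plan is to reduce part (i) to the one-parameter extension theorem via the product representation \eqref{345}. Then part (ii) follows by applying part (i) on the symmetric box and using Theorem \ref{emoj}.

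For (i), fix $k\in{\mathbb N}$. For each $i\in{\mathbb N}_n$, the family $(T_i(t_i))_{t_i\in[0,\tau_i)}$ is a locally equicontinuous $C$-semigroup in one variable and $C$ is injective. So the one-dimensional result (\cite[Theorem 2.6]{wang}, or \cite[Theorem 3.2.60]{FKP} in the SCLCS setting) gives a locally equicontinuous $C^k$-semigroup $(T_{i,k}(t_i))_{t_i\in[0,k\tau_i)}$ with $T_{i,k}(t_i)=T_i(t_i)C^{k-1}$ on $[0,\tau_i)$. Concretely, for $t_i\in[0,k\tau_i)$ one writes $t_i=s_1+\dots+s_k$ with $s_j\in[0,\tau_i)$ and sets $T_{i,k}(t_i):=T_i(s_1)\cdots T_i(s_k)$. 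Independence of the decomposition comes from iterating Lemma \ref{mata}, in its one-dimensional version.

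I then define
\[
T_k(t):=T_{1,k}(t_1)\cdots T_{n,k}(t_n)\,C^{-(n-1)k}\quad\text{formally},
\]
or, better, directly by $T_k(t):=T(s^{(1)})T(s^{(2)})\cdots T(s^{(k)})$ for any decomposition $t=s^{(1)}+\dots+s^{(k)}$ with $s^{(j)}\in\Omega$. Every $t\in k\Omega$ admits such a decomposition, for example $s^{(j)}=t/k$. The key step is well-definedness. Let $t=\sum_j s^{(j)}=\sum_j r^{(j)}$. Multiplying each product by $C^{(n-1)k}$ and using \eqref{345} turns $T(s^{(1)})\cdots T(s^{(k)})C^{(n-1)k}$ into $\prod_{j}\prod_{i}T_i(s^{(j)}_i)$. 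By Theorem \ref{tucko1}(i)-(a) all these operators commute, so the product regroups as $\prod_i\bigl[T_i(s^{(1)}_i)\cdots T_i(s^{(k)}_i)\bigr]$. Each bracket equals $T_{i,k}(t_i)$ by the one-dimensional well-definedness, and this is the same for both decompositions. Injectivity of $C$ (and commutation of $C$ with everything) then lets me cancel $C^{(n-1)k}$.

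With well-definedness in hand, the remaining properties are routine. $T_k(t)=T(t)C^{k-1}$ on $\Omega$ follows from the decomposition $t=t+0+\dots+0$. $T_k(0)=C^k$ is immediate. Strong continuity and local equicontinuity come from using the continuous decomposition $s^{(j)}=t/k$ on compacts of $k\Omega$: compacts of $k\Omega$ map into compacts of $\Omega$, and a product of equicontinuous families is equicontinuous. For the $C^k$-semigroup law $T_k(t+s)C^k=T_k(t)T_k(s)$ with $t,s,t+s\in k\Omega$, the plan is to multiply by a suitable power of $C$ and reduce coordinatewise to the one-dimensional identity $T_{i,k}(t_i+s_i)C^k=T_{i,k}(t_i)T_{i,k}(s_i)$. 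That identity is part of the one-dimensional theorem, and injectivity then cancels the extra power of $C$.

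For (ii), I apply the same construction on $\Omega=\prod(-\tau_i,\tau_i)$. The argument above goes through verbatim because $\Omega$ is a convex symmetric box and every $t\in k\Omega$ is $k\cdot(t/k)$. The needed commutativity again comes from Theorem \ref{tucko1}, through the reduction to $[0,\tau_i)$ in each coordinate direction and its negative. This yields a $C^k$-semigroup $(T_k(t))_{t\in k\Omega}$ with $T_k=TC^{k-1}$ on $\Omega$. Then I set $C_k(t):=\frac12[T_k(t)+T_k(-t)]$. Since $k\Omega=-k\Omega$, the first part of Theorem \ref{emoj} (with $C^k$ in place of $C$) shows this is a locally equicontinuous $C^k$-cosine function, and on $\Omega$ it equals $C(t)C^{k-1}$. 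The main obstacle I expect is the well-definedness and semigroup-law step for negative coordinates in (ii). There the one-dimensional extension must be performed on $(-\tau_i,\tau_i)$, i.e. for a local $C$-group-type object. I would handle it by extending separately on $[0,\tau_i)$ and on $(-\tau_i,0]$, and gluing via $T_i(t_i)T_i(-t_i)=T_i(0)C=C^2$, again cancelling powers of $C$ by injectivity.
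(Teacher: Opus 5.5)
Your proposal is correct, and for (i) it takes a genuinely different route from the paper.

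The paper inducts on $k$. It sets $T_k(t):=T_{k-1}(t)C$ on $(k-1)\Omega$ and $T_k(t):=T(t-\delta_t t)T_{k-1}(\delta_t t)$ elsewhere, with $\delta_t$ chosen continuously. It then verifies the $C^k$-semigroup law through the four cases (a)--(d), using Lemma \ref{mata} (applied to $T_{k-1}$) and Theorem \ref{tucko1}(i)-(a).

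You instead define $T_k(t)$ at once as $T(s^{(1)})\cdots T(s^{(k)})$ over an arbitrary $k$-fold decomposition of $t$ in $\Omega$. You get well-definedness by multiplying with $C^{(n-1)k}$, splitting via \eqref{345}, regrouping by commutativity, and invoking one-dimensional independence of the decomposition. That independence comes from iterated exchanges $T_i(a)T_i(b)=T_i(a')T_i(b')$ for $a+b=a'+b'$, i.e. Lemma \ref{mata} with $r=0$. Cancelling powers of $C$ is legitimate because $C$ is injective and commutes with every $T(t)$.

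Your route buys a lot:
\begin{itemize}
\item the semigroup law reduces to comparing a $2k$-fold decomposition of $t+s$ with a $k$-fold one padded by $T(0)^k=C^k$;
\item no case analysis is needed, and the extension is canonical;
\item strong continuity and local equicontinuity are immediate from $T_k(t)=T(t/k)^k$, rather than requiring an inductive argument tied to the selection $\delta_t$.
\end{itemize}
The paper's induction, in return, only needs the three-factor identity at each step rather than full $k$-factor independence.

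For (ii), which the paper also only sketches, the obstacle you anticipate is milder than you fear. For $a,b\in(-\tau_i,\tau_i)$ of opposite signs one has $a+b\in(-\tau_i,\tau_i)$, so $T_i(a)T_i(b)=T_i(a+b)C=T_i(b)T_i(a)$ outright. The exchange identity on $(-\tau_i,\tau_i)$ then follows by small mass transfers, since the set of admissible decompositions is convex. So your construction applies verbatim, with no separate extensions on the two half-intervals and no gluing. After that, Theorem \ref{emoj} applied to $T_k$ with $C^k$ in place of $C$ finishes exactly as in the paper.
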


\begin{proof}
(i): We will prove the assertion by mathematical induction. If $k=1,$ then this is trivially true; suppose now that an integer $k\in {\mathbb N} \setminus \{1\}$ is fixed as well as that there exists a (local) $C^{k-1}$-cosine function $(T_{k-1}(t))_{t\in (k-1)\Omega}$ such that $T_{k-1}(t)=T(t)C^{k-2}$ for all $t\in \Omega.$ We define $(T_{k}(t))_{t\in k\Omega}$ by $T_{k}(t):=T_{k-1}(t)C$ for $t\in (k-1) \Omega$. If $t\in k\Omega \setminus (k-1)\Omega$, then we pick up a fixed real number $\delta_{t} \in (0,1)$, which continuously depends on $t,$ such that $(1-\delta_{t})t\in \Omega $ and $\delta_{t}t\in (k-1)\Omega$; in this case, we define 
\begin{align}\label{se}
T_{k}(t):=T\bigl(t-\delta_{t}t\bigr)T_{k-1}\bigl(\delta_{t}t\bigr).
\end{align} 
It is clear that $T_{k}(0)=C^{k}$; by induction, we can prove that $T_{k}(t)C=CT_{k}(t)$, $t\in k\Omega,$ $k\in {\mathbb N}.$ The strong continuity and the local equicontinuity of operator family $(T_{k}(t))_{t\in k\Omega}$ can be proved inductively with the help of local equicontinuity of $(T(t))_{t\in \Omega}$.

It remains to be proved the functional equality
\begin{align}\label{sem}
T_{k}(t)T_{k}(s)=T_{k}(t+s)C^{k},
\end{align}
provided that $k\in {\mathbb N} \setminus \{1\}$ and $t,\ s,\ t+s\in k\Omega.$ There exist the following possibilities:
\begin{itemize}
\item[(a)] $t,\ s,\ t+s\in (k-1)\Omega;$
\item[(b)] $t,\ s\in (k-1)\Omega$ and $t+s\in k\Omega \setminus (k-1) \Omega;$
\item[(c)] $t \in (k-1)\Omega$ and $s,\ t+s\in k\Omega \setminus (k-1) \Omega;$
\item[(d)] $s\in (k-1)\Omega$ and $t,\ t+s\in k\Omega \setminus (k-1) \Omega.$
\end{itemize}
If (a) holds, then we have
\begin{align*}
T_{k}(t)T_{k}(s)& =T_{k-1}(t)C T_{k-1}(s)C=T_{k-1}(t)  T_{k-1}(s)C^{2}
\\& =T_{k-1}(t+s)C^{k-1}C^{2}=T_{k-1}(t+s)C^{k+1}=T_{k}(t+s)C^{k}.
\end{align*}
If (b) holds, then we have
\begin{align*}
T_{k}(t)T_{k}(s)C^{2k-2}&=T_{k-1}(t)C^{k-1} C T_{k-1}(s)C^{k-1} C\\&=T_{k-1}\bigl(t-\delta_{t+s}t \bigr)T_{k-1}\bigl(\delta_{t+s}t \bigr)CT_{k-1}\bigl(s-\delta_{t+s}s \bigr)T_{k-1}\bigl(\delta_{t+s}s \bigr)C.
\end{align*}
Since $C$ is injective, the semigroup $T_{k-1}(\cdot)$ is commutative due to Theorem \ref{tucko1}(i)-(a) and the above implies 
\begin{align*}
T_{k}(t)T_{k}(s)C^{2k-2}&=
T_{k-1}\bigl(t-\delta_{t+s}t \bigr)CT_{k-1}\bigl(s-\delta_{t+s}s \bigr)T_{k-1}\bigl(\delta_{t+s}t \bigr)T_{k-1}\bigl(\delta_{t+s}s \bigr)C
\\& =T_{k-1}\bigl(t+s-\delta_{t+s}(t+s)\bigr)C^{k}T_{k-1}\bigl(\delta_{t+s}(t+s)\bigr)C^{k} 
\\& = T \bigl(t+s-\delta_{t+s}(t+s)\bigr)C^{2k-2}T_{k-1}\bigl(\delta_{t+s}(t+s)\bigr)C^{k} 
\\& =T_{k}(t+s)C^{k}C^{2k-2},
\end{align*}
where we have used by \eqref{se} in the last equality.
By the injectivity of $C,$ we finally get that \eqref{sem} holds.

If (c) holds, then we have
\begin{align*}
T_{k}(t)T_{k}(s) &=T_{k-1}(t)C T\bigl( s-\delta_{s}s\bigr)T_{k-1}\bigl( \delta_{s}s\bigr)
\end{align*}
and
\begin{align*}
T_{k}(t+s)C^{k}=T\bigl( t+s-\delta_{t+s}(t+s)\bigr)T_{k-1}\bigl( \delta_{t+s}(t+s)\bigr)C^{k} .
\end{align*}
Multiplying the right hand sides of the last two equalities with $C^{k-2},$ it suffices to prove that
\begin{align*}
T_{k-1}(t)  T_{k-1}\bigl( s-\delta_{s}s\bigr)T_{k-1}\bigl( \delta_{s}s\bigr)=T_{k-1}\bigl( t+s-\delta_{t+s}(t+s)\bigr)T_{k-1}\bigl( \delta_{t+s}(t+s)\bigr)C^{k-1}.
\end{align*}
But, this equality follows from the induction hypothesis and Lemma \ref{mata}.
We can similarly prove \eqref{sem} in the case that (d) holds.

(ii): Lemma \ref{mata} and the first part of Theorem \ref{nap} continue to hold for the region $\Omega =(-\tau_{1},\tau_{1}) \times ... \times (-\tau_{n},\tau_{n}),$ as easily explained. Let an  integer $k\in {\mathbb N} $ be fixed, and let $(T_{k}(t))_{t\in k\Omega}$ be a locally equicontinuous $C^{k}$-semigroup such that $T_{k}(t)=T(t)C^{k-1}$ for all $t\in \Omega.$  Then we set
$$
C_{k}(t):=\frac{1}{2}\Bigl[ T_{k}(t)+T_{k}(-t) \Bigr],\quad t\in k\Omega.
$$
By Theorem \ref{emoj}, we know that $(C_{k}(t))_{t\in k\Omega}$ is a $C^{k}$-cosine function; it is also clear that $(C_{k}(t))_{t\in k\Omega}$ is locally equicontinuous. Finally, we have
$$
C_{k}(t)=\frac{1}{2}\Bigl[ T_{k}(t)+T_{k}(-t) \Bigr]=\frac{1}{2}\Bigl[ T(t)C^{k-1}+T(-t)C^{k-1} \Bigr]=C(t)C^{k-1},\quad t\in \Omega.
$$
\end{proof}

Now we would like to propose the following open problem:

\begin{prob}\label{proba}
Let $\Omega =(-\tau_{1},\tau_{1}) \times ... \times (-\tau_{n},\tau_{n})$ and let $(C(t))_{t\in \Omega}$ 
be a locally equicontinuous $C$-cosine function. Is it true that, for every integer $k\in {\mathbb N},$ there exists a locally equicontinuous $C^{k}$-cosine function $(C_{k}(t))_{t\in k\Omega}$ such that $C_{k}(t)=C(t)C^{k-1}$ for all $t\in \Omega ?$ 
\end{prob}

In connection with this problem, we would like to emphasize a few relevant facts. We can try to prove the assertion by mathematical induction in the following way: If $k=1,$ then this is trivially true; suppose now that an integer $k\in {\mathbb N} \setminus \{1\}$ is fixed as well as that there exists a locally equicontinuous $C^{k-1}$-cosine function $(C_{k-1}(t))_{t\in (k-1)\Omega}$ such that $C_{k-1}(t)=C(t)C^{k-2}$ for all $t\in \Omega.$ We define $(C_{k}(t))_{t\in k\Omega}$ by $C_{k}(t):=C_{k-1}(t)C$ for $t\in (k-1) \Omega$. If $t\in k\Omega \setminus (k-1)\delta \Omega$, then there exists a fixed real number $\delta_{t} \in (0,1)$, which continuously depends on $t,$ such that $(1-\delta_{t})t\in \Omega,$ $\delta_{t}t\in (k-1)\Omega$ and $(2\delta_{t}-1)t\in (k-1)\Omega$; in this case, we define 
\begin{align*}
C_{k}(t):=2C\bigl(t-\delta_{t}t\bigr)C_{k-1}\bigl(\delta_{t}t\bigr)-C_{k-1}\bigl((2\delta_{t}-1)t\bigr)C.
\end{align*} 
Then it is clear that $C_{k}(0)=C^{k}$ and $C_{k}(t)=C(t)C^{k-1}$ for all $t\in \Omega$; by induction, we get that $C_{k}(t)C=CC_{k}(t)$, $t\in k\Omega,$ $k\in {\mathbb N}.$ The strong continuity and the local equicontinuity of operator family $(C_{k}(t))_{t\in k\Omega}$ follows from the local equicontinuity of $(C(t))_{t\in \Omega}$, and it remains to be proved the functional equality
\begin{align}\label{shp}
C_{k}(t+s)C^{k}x+C_{k}(t-s)C^{k}x=2C_{k}(t)C_{k}(s),\ \mbox{if }t,\ s,\ t\pm s \in k\Omega \mbox{ and } x\in X.
\end{align}
Arguing as in the proof of (i), by recognising two options for any of cases (a)-(d) considered above, the functional equality \eqref{shp} will be verified provided that the following equalities hold: 
\begin{itemize}
\item[(i)] \begin{align*} &
C (t+s) Cx+2C(t-s-r)C(r)x-C(2r-t+s)Cx=2C(t)C(s)x,\quad x\in X,
\end{align*}
provided that $t, \ s,\ t+s,\ t-s-r,\ r,\ 2r-t+s\in \Omega;$
\item[(ii)] \begin{align*} &
2C (t+s-r) C(r)x-C(2r-t-s)Cx
\\&+2C(t-s-v)C(v)x-C(2v-t+s)Cx=2C(t)C(s)x,\quad x\in X,
\end{align*}
provided that $t, \ s,\ t+s-r,\ r,\ 2r-t-s,\ t+s-v,\ v,\ 2v-t+s\in \Omega;$
\item[(iii)] \begin{align*} &
2C (t+s-r) C(r)x-C(2r-t-s)Cx+C(t-s)Cx
\\& =2C(t-s)\cdot \Bigl[ 2C(s-v)C(v)x-C(2v-s)Cx\Bigr],\quad x\in X,
\end{align*}
provided that $t-s, \ t+s-r,\ r,\ 2r-t-s,\ s-v,\ v,\ 2v-s\in \Omega;$
\item[(iv)] {\small \begin{align*} &
2C (t+s-r) C(r)Cx-C(2r-t-s)C^{2}x+ 2C (t-s-v) C(v)Cx-C(2v-t+s)C^{2}x
\\& =2C(t-s)\cdot \Bigl[ 2C(s-w)C(w)x-C(2w-s)Cx\Bigr],\quad x\in X,
\end{align*}}provided that $t+s-r,\ r,\ 2r-t-s,\ t-s-v,\ v,\ 2v-t-s,\ t-s,\ s-w,\ w,\ 2w-s\in \Omega .$
\end{itemize}

All these equalities can be simply proved in the one-dimensional setting; but, we really do not know how to prove their validities in the higher-dimensional setting.

\section{Asymptotically almost periodic solutions to abstract multiparameter Cauchy problems}\label{separ}

In connection with Theorem \ref{tucko1} and the equality \eqref{345}, we would like to make several new observations concerning the asymptotically almost periodic solutions to abstract multiparameter Cauchy problems. As already mentioned, the results of this section can be viewed of some independent interest; for simplicity, we will work in the setting of complex Banach spaces here. So, let $(X,\| \cdot \| )$ be a complex Banach space, let (D1) hold, and let us consider the operator family $(R(t))_{t\in [0,+\infty)^{n}}$ given by \eqref{prod}. \vspace{0.1cm}

1. Suppose that  the strongly continuous operator family $(R_{i}(t_{i}))_{t_{i}\geq 0}\subseteq L(X)$ is almost periodic for every $i\in {\mathbb N}_{n} ,$ i.e., for every $x\in X $ and $i\in {\mathbb N}_{n} ,$ the mapping $t_{i}\mapsto R_{i}(t_{i})x,$ $t_{i}\geq 0$ is almost periodic.
Then $\sup_{s_{i}\geq 0}\bigl\| R_{i}\bigl(s_{i}\bigr)\bigr\|<+\infty$ for every $i\in {\mathbb N}_{n} ,$ and  we have $(\tau =(\tau_{1},...,\tau_{n})\in [0,+\infty)^{n},\  t=(t_{1},...,t_{n})\in [0,+\infty)^{n},\ x\in X)
:$
\begin{align*}&
\| R(t+\tau)x-R(t)x\| 
\\& =\Bigl\|R_{1}\bigl(t_{1}+\tau_{1}\bigr)\cdot R_{2}\bigl(t_{2}+\tau_{2}\bigr)\cdot ...\cdot R_{n}\bigl(t_{n}+\tau_{n}\bigr)x-  R_{1}\bigl(t_{1}\bigr)\cdot R_{2}\bigl(t_{2}\bigr) \cdot...\cdot R_{n}\bigl(t_{n}\bigr)x\Bigr\|
\\& \leq \Bigl\| \bigl[ R_{1}\bigl(t_{1}+\tau_{1}\bigr)-R_{1}\bigl(t_{1}\bigr)\bigr] \cdot R_{2}\bigl(t_{2}+\tau_{2}\bigr)\cdot ...\cdot R_{n}\bigl(t_{n}+\tau_{n}\bigr)x\Bigr\|
\\&+ \Bigl\| R_{1}\bigl(t_{1}\bigr) \cdot \Bigl[ R_{2}\bigl(t_{2}+\tau_{2}\bigr)\cdot ...\cdot R_{n}\bigl(t_{n}+\tau_{n}\bigr)x-R_{2}\bigl(t_{2}\bigr)\cdot ...\cdot R_{n}\bigl(t_{n}\bigr)x \Bigr]\Bigr\|
\\& \leq \Biggl[\sup_{s_{2}\geq 0}\bigl\| R_{2}\bigl(s_{2}\bigr)\bigr\| \Biggr] \cdot ... \cdot \Biggl[\sup_{s_{n}\geq 0}\bigl\| R_{n}\bigl(s_{n}\bigr)\bigr\| \Biggr]  \cdot \bigl\| R_{1}\bigl(t_{1}+\tau_{1}\bigr)x-R_{1}\bigl(t_{1}\bigr)x\bigr\|
\\& +\Biggl[\sup_{s_{1}\geq 0}\bigl\| R_{1}\bigl(s_{1}\bigr)\bigr\| \Biggr] \cdot \Bigl\| R_{2}\bigl(t_{2}+\tau_{2}\bigr)\cdot ...\cdot R_{n}\bigl(t_{n}+\tau_{n}\bigr)x-R_{2}\bigl(t_{2}\bigr)\cdot ...\cdot R_{n}\bigl(t_{n}\bigr)x \Bigr\|.
\end{align*}\index{almost periodic resolvent operator family}Repeating this argument, we can prove that $(R(t))_{t\in [0,+\infty)^{n}}$ is almost periodic, i.e., for each $x\in X,$ the mapping $t\mapsto R(t)x,$ $t\in [0,+\infty)^{n}$ is almost periodic.

2. Let us consider now the situation in which (D1) holds with the operator family $(R_{i}(t_{i}))_{t_{i}\geq 0}$ being asymptotically almost periodic for every $i\in {\mathbb N}_{n} $, i.e., for every $x\in X$ and $i\in {\mathbb N}_{n} ,$ the mapping $t_{i}\mapsto R_{i}(t_{i})x,$ $t_{i}\geq 0$ is asymptotically almost periodic. Arguing as above, we can show that $(R(t))_{t\in [0,+\infty)^{n}}$ is asymptotically almost periodic in the sense that, for every $x\in X$ and $\epsilon>0$, there exist $L>0$ and $M>0$ such that for each $t_{0}\in [0,+\infty)^{n}$ the ball $B(t_{0},L)$ contains a point $\tau \in  [0,+\infty)^{n}$ such that, for every $t=(t_{1},...,t_{n})\in [0,+\infty)^{n}$ with the property that $\min(t_{1},...,t_{n})\geq M$, we have $\|R(t+\tau)x-R(t)x\| \leq \epsilon.$

Now we would like to present the following illustrative application of this result; cf. also the problems considered in \cite[Subsection
8.1.6]{apsclcs}. 
Suppose that $X:= L^{2}[0, 1]$, $\alpha_{i} \in  (0,2) \setminus \{1\}$, $\theta_{i}:=\pi -\pi \alpha_{i}/2$ and
$A_{i} := e^{i\theta_{i}}\Delta,$ where $\Delta$ denotes the Dirichlet Laplacian ($i=1,2$). Then we know that  $A_{i}$
is the integral generator of an asymptotically almost periodic $(g_{\alpha_{i}},I)$-resolvent family $(R_{i}(t_{i}))_{t_{i}\geq 0}$ for $i=1,2$ (if $\alpha_{1}=\alpha_{2}=1,$ then $(R_{1}(t_{1}))_{t_{1}\geq 0}$ and $(R_{2}(t_{2}))_{t_{2}\geq 0}$ are almost periodic, and we can illustrate the first part with a nice example); see also \cite[Subsection 8.1.3]{apsclcs}.
If $f\in X$ and $u : [0,+\infty)^{2}\rightarrow X$ is defined by $u(t_{1},t_{2}):=R_{1}(t_{1})R_{2}(t_{2})f,$ $t_{1}\geq 0,$ $t_{2}\geq 0,$ then $u(\cdot,\cdot)$ is a solution of problem\index{Dirichlet Laplacian}
$$
{\mathbf D}_{t_{1}}^{\alpha_{1}}{\mathbf D}_{t_{2}}^{\alpha_{2}}u\bigl(t_{1},t_{2},x\bigr)= e^{i(\theta_{1}+\theta_{2})}\Delta^{2}_{x} u\bigl(t_{1},t_{2},x\bigr),\quad t_{1}\geq 0,\ t_{2}\geq 0,\ x\in [0,1],
$$
and $u(\cdot,\cdot)$ is asymptotically almost periodic in the sense that for each $\epsilon>0$ there exist $L>0$ and $M>0$ such that for each $t_{0}\in [0,+\infty)^{2}$ the ball $B(t_{0},L)$ contains a point $\tau \in  [0,+\infty)^{2}$ such that, for every $t=(t_{1},t_{2})\in [0,+\infty)^{2}$ with the property that $\min(t_{1},t_{2})\geq M$, we have $\|u(t+\tau)-u(t)\| \leq \epsilon;$ here, ${\mathbf D}_{t_{1}}^{\alpha_{1}}{\mathbf D}_{t_{2}}^{\alpha_{2}}u(\cdot,\cdot)$ denotes the mixed Caputo fractional derivative of function
$u(\cdot,\cdot)$ of order $\alpha=(\alpha_{1},\alpha_{2}).$

\section{Conclusions and final remarks}\label{moze}

In this paper, we have presented several new results concerning multiparameter $C$-semigroups and abstract multiparameter Cauchy problems of first order. We have introduced and thoroughly analyzed the class of multiparameter $C$-cosine functions and its applications to abstract multiparameter Cauchy problems of second order. We have also considered automatic extensions of multiparameter $C$-semigroups and multiparameter $C$-cosine functions as well as asymptotically almost periodic type solutions to abstract multiparameter Cauchy problems. 

Let us finally emphasize that the assertions of \cite[Theorem 1.2, Corollary 1.3, Theorem 1.4]{boni}, concerning the topologically transitive and topologically mixing cosine functions, can be straightforwardly reformulated for multiparameter cosine operator functions; see \cite{apsclcs} for the notion. This can be also done for multiparameter $C$-cosine operator functions (see \cite[Theorem 3.2.26, Theorem 3.2.44]{knjigah} for some results obtained in the one-dimensional setting); more details will be given somewhere else.

\section*{Declarations}

\noindent {\bf Conflicts of Interest:} The authors have no competing interests to declare that are relevant to the content of this article.\vspace{0.2cm}

\noindent {\bf Financial Interests:} 
Marko Kosti\'c is partially supported by Ministry
of Science and Technological Development, Republic of Serbia.\vspace{0.2cm}

\noindent {\bf Data Availability:} The data supporting the findings of this research study are available from
the authors, upon reasonable request.

\end{document}